\theoremstyle{plain}
\newtheorem{theorem}{Theorem}[section]
\newtheorem{proposition}[theorem]{Proposition}
\newtheorem{lemma}[theorem]{Lemma}
\newtheorem{corollary}[theorem]{Corollary}
\theoremstyle{definition}
\newtheorem*{definition}{Definition}
\newtheorem*{notation}{Notation}
\newtheorem*{example}{Example}
\theoremstyle{remark}
\newtheorem{remark}{Remark}
\newcommand{\KK}{\mathbb{K}}
\newcommand{\RR}{\mathbb{R}}
\newcommand{\paths}{\mathrm{Path}}
\newcommand{\edges}{\mathrm{Edge}}
\newcommand{\End}{\mathrm{End}}
\newcommand{\init}{\mathrm{in}}
\newcommand{\term}{\mathrm{ter}}
\newcommand{\Hilb}{\mathrm{Hilb}}
\renewcommand{\P}{\mathscr{P}}
\newcommand{\Q}{\mathscr{Q}}
\newcommand{\calP}{\mathcal{P}}
\newcommand{\supp}{\mathrm{supp}}
\newcommand{\Supp}{\mathrm{Supp}}
\newcommand{\parn}[1]{\left(#1\right)}
\renewcommand{\S}{\mathcal{S}}
\begin{document}
\title[Gr\"obner basis of LSS-ideal associated to trees]{Gr\"obner basis and Krull dimension of Lov\'asz-Saks-Sherijver ideal associated to a tree}
\author[M. Farrokhi D. G.]{Mohammad Farrokhi D. G.}
\email{m.farrokhi.d.g@gmail.com,\ farrokhi@iasbs.ac.ir}
\address{ Research Center for Basic Sciences and Modern Technologies (RBST), Institute for Advanced Studies in Basic Sciences (IASBS), 
Zanjan 45137-66731, Iran}

\author[A. A. Yazdan Pour]{{Ali Akbar} {Yazdan Pour}}
\email{yazdan@iasbs.ac.ir}
\address{Department of Mathematics, Institute for Advanced Studies in Basic Sciences (IASBS), Zanjan 45137-66731, Iran}
			
\subjclass[2010]{Primary: 13P10, 05E40; Secondary: 05C62}
\keywords{Gr\"obner basis, Lov\'asz-Saks-Sherijver ideal (LSS-ideal), Hilbert series, Krull dimension}
	
\begin{abstract}
Let $\KK$ be a field and $n$ be a positive integer. Let $\Gamma =([n], E)$ be a simple graph, where $[n]=\{1,\ldots, n\}$. If $S=\KK[x_1, \ldots, x_n, y_1, \ldots, y_n]$ is a polynomial ring, then the graded ideal 
\[ L_{\Gamma}^\KK(2) = \left( x_{i}x_{j} + y_{i}y_{j} \colon \quad \{i, j\} \in E({\Gamma})\right) \subset S,\]
is called the Lov\'{a}sz-Saks-Schrijver ideal, LSS-ideal for short, of $\Gamma$ with respect to $\mathbb{K}$. In the present paper, we compute a Gr\"obner basis of this ideal with respect to lexicographic ordering induced by $x_1>\cdots>x_n>y_1>\cdots>y_n$ when $\Gamma=T$ is a tree. As a result, we show that it is independent of the choice of the ground field $\KK$ and compute the Hilbert series of $L_{T}^\KK(2)$. Finally, we present concrete combinatorial formulas to obtain the Krull dimension of $S/L_{T}^\KK(2)$ as well as lower and upper bounds for Krull dimension.
\end{abstract}

\maketitle
\section{introduction and preliminaries}
Gr\"obner bases of an ideal, as special kinds of generating sets of the ideal, give rise to computational tools to determine many properties of the ideal. It is one of the main practical tools for solving systems of polynomial equations and computing the images of algebraic varieties under projections or rational maps. Gr\"obner bases were introduced in 1965, together with
an algorithm to compute them (Buchberger's algorithm), by Bruno Buchberger in his Ph.D. thesis. This notion has been introduced for ideals in the polynomial rings and extended to various noncommutative algebras such as free non-commutative algebra and exterior algebra \cite{aa-jh-th}.

Let $I$ be an ideal in the polynomial ring $R=\KK[x_1, \ldots,x_n]$. We denote by $\mathrm{Mon}(R)$ the set of all monomials in $R$, that is the set $\{x_1^{\alpha_1} \cdots x_n^{\alpha_n} \colon \; \alpha_1, \ldots, \alpha_n \in \mathbb{N} \cup \{0\} \}$. Let $\leq$ be a monomial ordering on $R$ by which we mean $\leq$ is a total ordering on $\mathrm{Mon}(R)$ such that:
\begin{itemize}
\item[(i)] $1 < u$ for all $1 \neq u \in \mathrm{Mon}(R)$;
\item[(ii)] if $u, v \in \mathrm{Mon}(R)$ and $u < v$, then $uw < vw$ for all $w \in \mathrm{Mon}(R)$.
\end{itemize}
Some well-known monomial orders are lexicographic $\leq_{\mathrm{lex}}$, degree lexicographic $\leq_{\mathrm{dlex}}$, and degree reverse lexicographic $\leq_{\mathrm{revlex}}$ ordering (see \cite{wwa-pl} for detailed descriptions). 

Let $\leq$ be a fixed monomial ordering on $\mathrm{Mon}(R)$ and $\mathbb{N}_\circ$ be the set of non-negative integers. If $f = \sum_{\alpha \in \mathbb{N}_\circ^n} a_\alpha \textbf{x}^\alpha$ is a nonzero polynomial of $R$ with each $a_\alpha\in \KK$, then the initial monomial of $f$ with respect to $\leq$ is the biggest monomial with respect to $\leq$ among the monomials belonging to $\supp(f):=\{\mathbf{x}^\alpha \colon \; a_\alpha\neq 0\}$. Let $\init_\leq(f)$ be the initial monomial of $f$ with respect to $\leq$. If $I$ is a nonzero ideal of $R$, the initial ideal of $I$ with respect to $\leq$ is the monomial ideal of $R$ generated by $\{\init_\leq(f) \colon \; 0\neq f \in I \}$. Let $\init_\leq(I)$ denote the initial ideal of $I$, i.e.
\[\init_\leq(I)= (\init_\leq(f) \colon \; 0\neq f \in I).\]

\begin{definition}
Let $I$ be a nonzero ideal of $R$. A finite set of nonzero polynomials $\{g_1, \ldots , g_s\}$ where $g_i \in I$ is said to be a Gr\"obner basis of $I$ with respect to $\leq$ if the initial ideal $\init_\leq(I)$ of $I$ is generated by the monomials $\init_\leq(g_1), \ldots, \init_\leq(g_s)$.
\end{definition}

Let $\leq$ be a monomial ordering on $R$. The following facts are well-known in the theory of Gr\"obner bases:
\begin{itemize}
\item Gr\"obner bases always exist for any ideal $I$ of $R$. Moreover, any Gr\"obner basis of $I$ can be transformed into a reduced Gr\"obner basis $\{g_1,\ldots,g_m\}$ in the following sense:
\begin{itemize}
\item the coefficient of $\init_\leq(g_i)$ in $g_i$ is $1$, for all $1\leq i\leq m$,
\item If $i\neq j$, then none of the monomials belonging to $\supp(g_j)$ is divisible by $\init_\leq(g_i)$.
\end{itemize}
\item If $\mathcal{G}$ is a Gr\"obner basis of $I$ then so is $\mathcal{G}'$ for any $\mathcal{G} \subseteq \mathcal{G}' \subseteq I$.
\item Any Gr\"obner basis of $I$ is a set of generators for $I$.
\item If $\mathcal{G} = \{g_1, \ldots, g_s\}$ is a Gr\"obner basis of $I$ and if $f_1, \ldots, f_s \in I$ are nonzero polynomials with $\init_\leq(f_i) = \init_\leq(g_i)$, then $\{f_1, \ldots,f_s\}$ is also a Gr\"obner basis of $I$.
\item If $\mathcal{G} = \{g_1, \ldots, g_s\}$ is a Gr\"obner basis of $I$, then every polynomial $f \in R$ has a unique remainder $r$ with respect to $g_1, \ldots, g_s$ (see \cite[Lemma 2.2.3]{jh-th}). In this case, we say that $f$ reduces to $r$ with respect to $g_1, \ldots, g_s$.
\item If $\mathcal{G} = \{g_1, \ldots, g_s\}$ is a Gr\"obner basis of $I$, then a nonzero polynomial $f \in R$ belongs to $I$ if and only if the unique remainder of $f$ with respect to $g_1, \ldots, g_s$ is $0$.
\item If $f, g_1, \ldots, g_s$ are non-zero polynomials in $R$, then division algorithm (see e.g. \cite[Theorem 2.2.1]{jh-th}) guarantees the existence of the following standard form
\[f=f_1g_1+\cdots+f_sg_s+r\]
for $f$, where
\begin{itemize}
\item[(a)]if $r\neq 0$ and $u\in\supp(r)$, then none of the initial monomials $\init_\leq(g_1),\ldots,\init_\leq(g_s)$ divides $u$,
\item[(b)]if $f_i\neq0$, then $\init_\leq(f)\geq\init_\leq(f_ig_i)$.
\end{itemize}
\end{itemize}
Let $\mathcal{G}=\{f_1, \ldots,f_s\}$ be a set of generators of $I$. The Buchberger's criterion provides an efficient way to check whether $\mathcal{G}$ is a Gr\"obner basis of $I$ or not. Given nonzero polynomials $f, g \in R$, let $c_f$ and $c_g$ denote the coefficients of $\init_\leq(f)$ and $\init_\leq(g)$ in $f$ and $g$, respectively. The polynomial
\[S(f, g) = \frac{\mathrm{lcm}(\init_\leq(f), \init_\leq(g))}{c_f \cdot \init_\leq(f)}f - \frac{\mathrm{lcm}(\init_\leq(f), \init_\leq(g))}{c_g \cdot \init_\leq(g)}g\]
is called the $S$-polynomial of $f$ and $g$. We now come to the most important theorem in the theory of Gr\"obner bases.
\begin{theorem}[Buchberger's criterion] \label{Buchberger criterion}
Let $I$ be a nonzero ideal of $R$ and $\mathcal{G} = \{g_1, \ldots, g_s\}$ be a system of generators of $I$. Then $\mathcal{G}$ is a Gr\"obner basis of $I$ if and only if  $S(g_i, g_j)$ reduces to $0$ with respect to $g_1, \ldots,g_s$, for all $i \neq j$.
\end{theorem}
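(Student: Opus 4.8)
The plan is to prove the two implications separately; the forward one is routine, and the reverse one carries all the substance.

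For the forward implication, suppose $\mathcal{G}$ is a Gr\"obner basis of $I$ and fix $i\neq j$. Since $g_i,g_j\in I$ we have $S(g_i,g_j)\in I$, so the division algorithm supplies a standard form $S(g_i,g_j)=\sum_{k=1}^s q_kg_k+r$ in which no monomial of $\supp(r)$ is divisible by any $\init_\leq(g_k)$ (condition (a)). Now $r=S(g_i,g_j)-\sum_k q_kg_k\in I$; if $r\neq0$ then $\init_\leq(r)\in\init_\leq(I)=(\init_\leq(g_1),\dots,\init_\leq(g_s))$, so some $\init_\leq(g_k)$ divides $\init_\leq(r)\in\supp(r)$, contradicting condition (a). Hence $r=0$, i.e. $S(g_i,g_j)$ reduces to $0$ with respect to $g_1,\dots,g_s$.

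For the reverse implication, assume every $S(g_i,g_j)$ reduces to $0$ and fix an arbitrary $0\neq f\in I$; since $(\init_\leq(g_1),\dots,\init_\leq(g_s))\subseteq\init_\leq(I)$ is automatic, it suffices to show that $\init_\leq(f)$ is divisible by some $\init_\leq(g_k)$. I would write $f=\sum_{k=1}^s h_kg_k$ and, among all such representations, choose one minimizing the monomial $\delta:=\max\{\init_\leq(h_kg_k)\colon h_k\neq0\}$ in the ordering $\leq$. One always has $\init_\leq(f)\leq\delta$. If equality holds, then $\delta=\init_\leq(h_kg_k)$ for some $k$, whence $\init_\leq(g_k)\mid\delta=\init_\leq(f)$ and we are done; so everything comes down to excluding the possibility $\init_\leq(f)<\delta$. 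In that case the terms of monomial $\delta$ contributed by the various $h_kg_k$ with $k\in T:=\{k\colon\init_\leq(h_kg_k)=\delta\}$ must cancel, and in particular $|T|\geq2$.

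The main obstacle, and the classical core of the argument, is the \emph{cancellation lemma}: writing $c_ku_k$ for the leading term of $h_k$ when $k\in T$ (so $u_k\,\init_\leq(g_k)=\delta$), the sum $\sum_{k\in T}c_ku_kg_k$, whose initial monomial is strictly below $\delta$, can be re-expressed as a $\KK$-linear combination $\sum_{j,k\in T}a_{jk}\,w_{jk}\,S(g_j,g_k)$ of monomial multiples of $S$-polynomials in which every $w_{jk}\,\mathrm{lcm}(\init_\leq(g_j),\init_\leq(g_k))$ divides $\delta$. Since $\init_\leq(S(g_j,g_k))$ is by construction strictly smaller than $\mathrm{lcm}(\init_\leq(g_j),\init_\leq(g_k))$, this gives $\init_\leq(w_{jk}S(g_j,g_k))<\delta$ for each term. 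Feeding in the hypothesis, each $S(g_j,g_k)$ reduces to $0$ and hence has a standard form $S(g_j,g_k)=\sum_\ell q^{(jk)}_\ell g_\ell$ with $\init_\leq(q^{(jk)}_\ell g_\ell)\leq\init_\leq(S(g_j,g_k))$ whenever $q^{(jk)}_\ell\neq0$ (condition (b)). Substituting this back, multiplying through by the $w_{jk}$, and regrouping, namely collecting the coefficient of each $g_\ell$ together with the lower parts $h_k-c_ku_k$ for $k\in T$ and the untouched $h_k$ for $k\notin T$, produces a new representation $f=\sum_\ell\tilde h_\ell g_\ell$ in which $\init_\leq(\tilde h_\ell g_\ell)<\delta$ for every $\ell$ with $\tilde h_\ell\neq0$. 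This contradicts the minimality of $\delta$, so the case $\init_\leq(f)<\delta$ cannot occur, and the proof is complete. The only delicate point is the degree bookkeeping that keeps every piece strictly below $\delta$ after the substitution; the cancellation identity itself is a short linear-algebra computation with terms of a single fixed monomial degree and does not use the hypothesis on $S$-polynomials.
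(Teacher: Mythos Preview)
The paper does not prove this theorem; it is stated without proof as the classical Buchberger criterion and simply cited as background (the paper only \emph{applies} it in the proof of Theorem~\ref{Groebner basis of LSS_T(2)}). So there is no ``paper's own proof'' to compare against.

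Your argument is the standard textbook proof (essentially the one in \cite{jh-th}, or Cox--Little--O'Shea) and is correct in outline. The forward direction is fine. For the reverse direction your strategy---choose a representation $f=\sum h_kg_k$ minimizing $\delta=\max_k\init_\leq(h_kg_k)$, and if $\init_\leq(f)<\delta$ rewrite the top part via the cancellation/syzygy lemma and the reductions of the $S$-polynomials to lower $\delta$---is exactly right. Two small points worth tightening if you write this out in full: (i) the ``cancellation lemma'' you invoke is the telescoping identity
\[
\sum_{k\in T}c_k u_k g_k=\sum_{i=1}^{|T|-1}\Bigl(\sum_{\ell\leq i}c_{k_\ell}c_{g_{k_\ell}}\Bigr)\frac{\delta}{\mathrm{lcm}(\init_\leq(g_{k_i}),\init_\leq(g_{k_{i+1}}))}\,S(g_{k_i},g_{k_{i+1}}),
\]
so only \emph{consecutive} $S$-polynomials (in some enumeration of $T$) are needed, and the monomial factors $w_{jk}$ automatically satisfy $w_{jk}\,\mathrm{lcm}(\init_\leq(g_j),\init_\leq(g_k))=\delta$, not merely divide it; (ii) the existence of a minimal $\delta$ uses that $\leq$ is a well-ordering on $\mathrm{Mon}(R)$, which you should state explicitly. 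With those made precise the proof is complete.
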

The Buchberger criterion (Theorem \ref{Buchberger criterion}) supplies an algorithm to compute a Gr\"obner basis starting from a system of generators of an ideal. This algorithm is so-called as  Bughberger's algorithm. In general, the complexity of this algorithm is of double exponential order. Finding an explicit and quick algorithm for computing a Gr\"obner basis of some families of ideals is very important in the context of computational commutative algebra. The aim of this paper is to compute explicitly a Gr\"obner basis of LSS-ideals associated to trees.

Let $\KK$ be a field, $n$ be a positive integer, and set $[n]=\{1,\ldots, n\}$. Let $\Gamma =([n], E)$ be a simple graph with vertex set $[n]$ and edge set $E$. In \cite{jh-am-ssm-vw} and later in \cite{ac-vw, ak}, the authors study the graded ideal 
\[ L_{\Gamma}^\KK(d) = \left( x_{i,1}x_{j,1} + \cdots + x_{i,d}x_{j,d} \colon \quad \{i, j\} \in E({\Gamma})\right)\]
in the polynomial ring $\KK[x_{i,k} \colon \; i = 1,\ldots,n,\; k = 1, \ldots, d]$. The ideal $L_{\Gamma}^\KK(d)$ is called the \textit{Lov\'{a}sz-Saks-Schrijver ideal}, \textit{LSS-ideal} for short, of $\Gamma$ with respect to $\KK$. For $d = 1$, the ideal $L_\Gamma^\KK(d)$ is a square-free monomial ideal known as the \textit{edge ideal} of $\Gamma$. The algebraic properties of LSS-ideal, such as being prime, radical, and complete intersection are studied in \cite{ac-vw,jh-am-ssm-vw}. Also, in \cite{jh-am-ssm-vw} the (minimal) primary decomposition of $L_\Gamma^\KK(2)$ has been presented in terms of combinatorial properties of $\Gamma$. Let $S=\KK[x_1,\ldots,x_n,y_1,\ldots,y_n]$. In \cite[Corollary 3.6]{ak} the author shows that $L_\Gamma^\KK(2)$ is a complete intersection if and only if the parity binomial edge ideal $\mathcal{I}_{\Gamma}\subseteq S$ of $\Gamma$, defined as below, is a complete intersection:
\[\mathcal{I}_{\Gamma} = \left( x_ix_j-y_iy_j \colon \quad \{i,j\} \in E(\Gamma) \right).\]
Recall that an ideal $I\subseteq R$ is called \textit{complete intersection} if $I$ is generated by a sequence $a_1,\ldots,a_m$ such that $a_i$ is not a zero-divisor of $R/(a_1,\ldots,a_{i-1})$, for $i=1,\ldots,m$. Note that $L_\Gamma^\KK(2)=\mathcal{I}_G$ if $\mathrm{char}(\KK) = 2$. However, it is shown in \cite{ak} that the graded Betti numbers $\beta_{i,j}(S/L_\Gamma^\KK(2))$ and $\beta_{i,j}(S/\mathcal{I}_{\Gamma})$ coincide in any characteristic. It turns out that the algebraic invariants of the LSS-ideal and parity binomial ideal (such as dimension, depth, regularity, projective dimension, etc.) are the same.

When $\KK=\mathbb{R}$ the vanishing set of the ideal $L_{\Gamma}^\KK(d)$ coincides with the variety of orthogonal representations of complement of $\Gamma$ in $\RR^d$. Orthogonal representations of graphs were introduced by Lov\'{a}sz in 1979 \cite{ll2}, and it is shown that they are intimately related to important combinatorial properties of graphs (see \cite[Chapter 10]{ll1}). An \textit{orthogonal representation} of $\Gamma$ in $\mathbb{R}^d$ assigns to each $i \in [n]$ a vector $u_i \in \mathbb{R}^d$ such that $u^T_i u_j = 0$, whenever $\{i,j\} \in \bar{E}=\binom{[n]}{2} \setminus E$. The variety of orthogonal representations was studied in the works of Lov\'asz, Saks, and Schrijver \cite{ll-ms-as-89, ll-ms-as-00}. The reference \cite{ll2} is a nice reference for a comprehensive treatment of orthogonal representations, their varieties, and their relations to important combinatorial properties of graphs.

We find a Gr\"obner basis of $L_{T}^\KK(2)$ with respect to lexicographic ordering induced by $x_1>\cdots>x_n>y_1>\cdots>y_n$ when $T$ is a tree, that is a connected graph without any cycle. Theorem~\ref{Groebner basis of LSS_T(2)} shows that Gr\"obner basis of $L_T^{\KK}(2)$ can be described by paths of even and odd length in $T$. We show in Corollary~\ref{Groebner basis of LSS_T(2): T has descending labeling} that Gr\"obner basis has a simple form if we restrict ourselves to a monomial ordering induced by an ascending labeling on vertices of $T$. It turns out that the Gr\"obner basis of $L_{T}^\KK(2)$ (and hence its Hilbert series and Krull dimension) with respect to this ordering is independent of the choice of the ground field $\KK$. Also, in Theorem~\ref{Hilbert series} and Proposition~\ref{dim(S/I): Explicit formula}, we present  explicit formulas for the Krull dimension of $S/L_{T}^\KK(2)$ in terms of the combinatorics of $T$. Finally, in Corollary~\ref{Lower and upper bounds for dim(S/I)}, we show that the Krull dimension of $S/L_{T}^\KK(2)$ is bounded below by $n+1$ and bounded above by $n-1+p(T)$ where $p(T)$ is the number of pendants of $T$. Recall that a pendant vertex in $T$ is a vertex which has a unique neighbour in $T$. For the graph theoretical concepts in this paper, the reader may refer to \cite{jab-usrm}.
\section{Gr\"obner basis of LSS-ideal of trees}
In this section, we compute a Gr\"obner basis of LSS-ideal of a given tree $T$ with respect to lexicographic ordering induced by
\[x_1>\cdots>x_n>y_1>\cdots>y_n.\]
It turns out that the Gr\"obner basis is characterized by even and odd paths in $T$. 

In what follows, any set of vertices of a path $P\colon v_1,\ldots,v_{2m+1}$ of even length including vertices of odd indices except the end vertices is called an \textit{odd subset} of $P$. For any binomial $w=u\pm v$ with $u,v\in\mathrm{Mon}(S)$ and $u>v$ let $\init_\leq(w):=u$ and $\term_\leq(w):=v$.
\begin{theorem}\label{Groebner basis of LSS_T(2)}
Let $T$ be a tree with vertex set $[n]$ and $I:=L_T^{\KK}(2)$ be the LSS-ideal of $T$. Put
\begin{itemize}
\item[\rm (i)]for every path $P\colon v_1,v_2,\ldots,v_{2m}$ of $T$ of odd length
\[g_P:=(x_{v_1}x_{v_{2m}}+y_{v_1}y_{v_{2m}})z_P,\]
where $z_P=y_{v_3}\cdots y_{v_{2m-1}}$ and $v_1<v_{2m}$;
\item[\rm (ii)]for any path $P\colon v_1,v_2,\ldots,v_{2m+1}$ of $T$ of even length and any odd subset $\mathcal{O}$ of $P$,
\[g'_{P,\mathcal{O}}:=(x_{v_1}y_{v_{2m+1}}-x_{v_{2m+1}}y_{v_1})z'_{P,\mathcal{O}},\]
where $z'_{P,\mathcal{O}}=(\prod_{v\in\mathcal{O}}x_v)\cdot(\prod_{v\in V(P)\setminus(\mathcal{O}\cup\{v_1,v_{2m+1}\})}y_v)$ and $v_1<v_{2m+1}$.
\end{itemize}
Then 
\begin{align*}
\mathcal{G}:=&\{g_P \colon \quad P\emph{ is a path of odd length in }T\}\\
&\cup\{g'_{P,\mathcal{O}} \colon \quad P\emph{ is a path of even length in }T\emph{ and }\mathcal{O}\emph{ is an odd subset of }P\}
\end{align*}
is a Gr\"{o}bner basis of $I$ with respect to the lexicographic ordering induced by $x_1>\cdots>x_n>y_1>\cdots>y_n$. In particular, $I$ is a radical ideal.
\end{theorem}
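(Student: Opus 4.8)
The plan is to verify Buchberger's criterion (Theorem~\ref{Buchberger criterion}) for $\mathcal{G}$. First one must know that $\mathcal{G}\subseteq I$, so that $\mathcal{G}$ really generates $I$; this is automatic once membership is shown, since the degree-two generators $x_ix_j+y_iy_j$ of $I$ are exactly the $g_P$ with $P$ a single edge. I would prove $g_P\in I$ and $g'_{P,\mathcal{O}}\in I$ by induction on the length of $P$, peeling an edge off an end of $P$ and using the identities
\[
x_cf_{ab}-x_af_{bc}=-\,y_b(x_ay_c-x_cy_a),\qquad y_cf_{ab}-y_af_{bc}=x_b(x_ay_c-x_cy_a)
\]
(with $f_{uv}:=x_ux_v+y_uy_v$) and their consequences, which transport the relevant ``endpoint binomial'' along the path one step at a time. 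Conceptually, these binomials say that on $V(I)$ the vectors at the two ends of a path of even (resp.\ odd) length are parallel (resp.\ orthogonal) provided all intermediate vectors are nonzero, and the monomials $z_P$, $z'_{P,\mathcal{O}}$ are products that absorb those nonvanishing conditions. Next I would record the leading terms: since $x_i>y_j$ for all $i,j$ we get $\init_\leq(g_P)=x_{v_1}x_{v_{2m}}z_P$, and since $v_1<v_{2m+1}$ we get $\init_\leq(g'_{P,\mathcal{O}})=x_{v_1}y_{v_{2m+1}}z'_{P,\mathcal{O}}$; in particular every leading monomial is squarefree.

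For Buchberger's criterion I would treat the three families $S(g_P,g_Q)$, $S(g_P,g'_{Q,\mathcal{O}})$ and $S(g'_{P,\mathcal{O}},g'_{Q,\mathcal{O}'})$. The product criterion disposes of any pair with coprime leading terms, so I may assume $\gcd(\init_\leq g,\init_\leq g')\neq 1$; as the leading monomials are squarefree and supported on variables attached to vertices of $P$ and of $Q$, this forces $P\cap Q\neq\varnothing$. Here the tree hypothesis is the key: two paths in a tree meet in a (possibly single-vertex) subpath, and $P\cup Q$ is a subtree with at most four leaves, so there are finitely many ``overlap types'' — one path inside the other, overlap in a proper subpath, sharing a single vertex, and so on — each further split by the parities of the pieces. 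In each type I would write $S(g,g')$ explicitly and reduce it to $0$ using other members of $\mathcal{G}$ together with the degree-two generators. The recurring mechanism is a concatenation/splitting identity: the product of the endpoint binomials of the two overlapping paths, times the relevant $z$-monomials, rewrites as $\pm$ the endpoint binomial of the concatenated path times a suitable interior monomial, plus a tail cleared by the division algorithm — and the concatenated path lies in $\mathcal{G}$ again because its length-parity is the sum of the two parities. This explains why $\mathcal{G}$ must carry both the ``orthogonality'' elements $g_P$ and the ``parallelism'' elements $g'_{P,\mathcal{O}}$, and why for even paths one needs the full family indexed by odd subsets: the interior monomial produced by a reduction need not be the all-$y$ one, so closure forces all admissible $x/y$ patterns at the odd-index interior vertices.

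The genuine obstacle, I expect, is the third family $S(g'_{P,\mathcal{O}},g'_{Q,\mathcal{O}'})$ when $P$ and $Q$ overlap in a proper subpath: then $z'_{P,\mathcal{O}}$ and $z'_{Q,\mathcal{O}'}$ need not agree on the shared vertices (one may carry $x_v$ where the other carries $y_v$), so the $S$-polynomial does not telescope directly and must be reduced by playing several $g'_{R,\mathcal{O}''}$ — for the various concatenated and split subpaths $R$, with odd subsets $\mathcal{O}''$ dictated by $\mathcal{O}$ and $\mathcal{O}'$ — against one another while tracking which variable sits at each shared vertex; arranging this bookkeeping so the multi-step reduction provably ends at $0$ is the crux. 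Once every $S$-polynomial is reduced to $0$, Buchberger's criterion yields that $\mathcal{G}$ is a Gr\"obner basis, so $\init_\leq(I)=(\init_\leq(g)\colon g\in\mathcal{G})$ is generated by squarefree monomials, hence radical. Finally, an ideal with radical initial ideal is itself radical: $\init_\leq(\sqrt I)\subseteq\sqrt{\init_\leq(I)}=\init_\leq(I)$, so $S/I$ and $S/\sqrt I$ share the same Hilbert function, and since $I\subseteq\sqrt I$ this forces $I=\sqrt I$. This proves the last assertion of the theorem.
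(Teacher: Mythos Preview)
Your proposal follows essentially the same route as the paper's proof: show $\mathcal{G}\subseteq I$ by explicit identities, then verify Buchberger's criterion via a case analysis on how the two paths overlap in the tree (the paper's Cases (i,i), (i,ii), (ii,ii)), using the product criterion to dismiss pairs with coprime leading terms, and deduce radicality from the squarefree initial ideal exactly as you outline. The paper's argument indeed spends most of its length on the explicit bookkeeping you anticipate as the crux---Case (ii,ii) alone splits into four main cases with numerous subcases depending on which of $y_{u_{2m+1}},y_{v_{2m'+1}}$ divide the gcd of the leading terms and how $u_1,v_1$ sit relative to $\mathcal{O}_P,\mathcal{O}_Q$---so your identification of where the work lies is accurate.
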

\begin{proof}
First we show that $\mathcal{G} \subseteq I$. To do this, we show that every polynomial in parts (i)--(ii) belongs to $I$. 

(i) Let $P\colon v_1,\ldots,v_{2m}$ be a path of odd length in $T$, and assume without loss of generality that $v_i=i$ for $i=1,\ldots,2m$. Then
\begin{multline*}
g_P=\frac{z}{y_1y_2}(x_1x_2+y_1y_2)\\
+\sum_{i=1}^{m-1}\left(\frac{x_1x_{2i+2}z}{y_1y_{2i}y_{2i+1}y_{2i+2}}(x_{2i}x_{2i+1}+y_{2i}y_{2i+1})-\frac{x_1x_{2i}z}{y_1y_{2i}y_{2i+1}y_{2i+2}}(x_{2i+1}x_{2i+2}+y_{2i+1}y_{2i+2})\right)
\end{multline*}
belongs to $I$, where $z=y_1\cdots y_{2m}$.

(ii) Let $P\colon v_1,\ldots,v_{2m+1}$ be a path of even length in $T$, $\mathcal{O}$ be an odd subset of $P$, and assume without loss of generality that $v_i=i$ for $i=1,\ldots,2m+1$. If $\mathcal{O}=\varnothing$, 
\begin{multline*}
g'_{P,\mathcal{O}}=\sum_{i=1}^m\left(-\frac{x_{2i+1}z}{y_{2i-1}y_{2i}y_{2i+1}}(x_{2i-1}x_{2i}+y_{2i-1}y_{2i})+\frac{x_{2i-1}z}{y_{2i-1}y_{2i}y_{2i+1}}(x_{2i}x_{2i+1}+y_{2i}y_{2i+1})\right)
\end{multline*}
belongs to $I$, where $z=y_1\cdots y_{2m+1}$. Now, assume that $\mathcal{O}=\{i_1,\ldots,i_t\}$ is non-empty with $i_1<\cdots<i_t$. Put $\mathcal{O}':=\mathcal{O}\cup\{1,2m+1\}$. Also, let $P_j$ denote the path from $i_j$ to $i_{j+1}$ for all $1\leq j<t$. Then
\[g'_{P,\mathcal{O}}=\sum_{j=1}^m\frac{z}{x_{i_j}x_{i_{j+1}}z'_{P_j,\varnothing}}\cdot g'_{P_j,\varnothing}\]
belongs to $I$, where $z=(\prod_{v\in \mathcal{O}}x_v)\cdot(\prod_{v\in V(P)\setminus \mathcal{O}'}y_v)$.

In what follows, we show that $\mathcal{G}$ is a Gr\"{o}bner basis for $I$ with respect to the lexicographic ordering induced by $x_1>\cdots>x_n>y_1>\cdots>y_n$. Accordingly, we put $\init(w):=\init_\leq(w)$ and $\term(w):=\term_\leq(w)$ for any binomial $w\in S$. Since polynomials with coprime leading terms reduce to zero modulo $\mathcal{G}$ (see \cite[Lemma 2.3.1]{jh-th}), we always assume that the polynomials under considerations have non-coprime leading terms. Also, we consider a superficial direction from $v_1$ to $v_{2m+1}$ on any path $v_1,\ldots,v_{2m+1}$ of type (ii). We have three cases to consider:

\textbf{Case (i,i)}. Let $P\colon u_1,\ldots,u_{2m}$ and $Q:v_1,\ldots,v_{2m'}$ be paths in $T$ of length at least one. Then either $\gcd(\init(g_P),\init(g_Q))|z_P$ or $\gcd(\init(g_P),\init(g_Q))\nmid z_P$. In the former case,
\begin{align*}
S(g_P,g_Q)&=\frac{z_Pz_Qx_{v_1}x_{v_{2m'}}y_{u_1}y_{u_{2m}}}{z}-\frac{z_Pz_Qx_{u_1}x_{u_{2m}}y_{v_1}y_{v_{2m'}}}{z}\\
&=g_Q\parn{\prod_{v\in V(P)\setminus V(Q)}y_v}-g_P\parn{\prod_{v\in V(Q)\setminus V(P)}y_v}
\end{align*}
is in standard form assuming that $x_{v_1}x_{v_{2m'}} > x_{u_1}x_{u_{2m}}$, where $z=\gcd(\init(g_P),\init(g_Q))$. Also, in the latter case,
\[S(g_P,g_Q)=\frac{z_Pz_Qy_{u_1}y_{u_{2m}}x_{v_{j'}}}{z}-\frac{z_Pz_Qx_{u_{i'}}y_{v_1}y_{v_{2m'}}}{z}\]
and either
\[S(g_P,g_Q)=g'_{R,\varnothing}\parn{\prod_{v\in V(P)\cap V(Q)}y_v},\]
is in standard form if $V(P)\subseteq V(Q)$ or $V(Q)\subseteq V(P)$, or 
\[S(g_P,g_Q)=g'_{R,\varnothing}\parn{\prod_{v\in (V(P)\cap V(Q))\setminus\{u_k\}}y_v},\]
is in standard form if neither $V(P)\subseteq V(Q)$ nor $V(Q)\subseteq V(P)$ and $u_k$ is the vertex of degree three in the subgraph induced by $V(P)\cup V(Q)$. Here $R$ is the path between $u_{i'}$ and $v_{j'}$, where $i',j'$ are such that $x_{u_i}z=x_{v_j}z=\gcd(\init(g_P),\init(g_Q))$ if $\{i,i'\}=\{1,2m\}$ and $\{j,j'\}=\{1,2m'\}$.

\textbf{Case (i,ii)}. Let $P\colon u_1,\ldots,u_{2m}$ and $Q:v_1,\ldots,v_{2m'+1}$ be paths in $T$ of length at least one, $\mathcal{O}$ be an odd subset of $Q$, and $\S:=S(g_P,g'_{Q,\mathcal{O}})$. Then either $\gcd(\init(g_P),\init(g'_{Q,\mathcal{O}}))$ is coprime to $x_{u_1}x_{u_{2m}}$ or $\gcd(\init(g_P),\init(g'_{Q,\mathcal{O}}))$ is divisible by $x_{u_1}$.

First assume that $\gcd(\init(g_P),\init(g'_{Q,\mathcal{O}}))$ is coprime to $x_{u_1}x_{u_{2m}}$. We have two cases to consider. If $y_{v_{2m'+1}}\nmid\gcd(\init(g_P),\init(g'_{Q,\mathcal{O}}))$, then 
\begin{align*}
S(g_P,g'_{Q,\mathcal{O}})&=\frac{z_Pz'_{Q,\mathcal{O}}y_{u_1}y_{u_{2m}}x_{v_1}y_{v_{2m'+1}}}{z}+\frac{z_Pz'_{Q,\mathcal{O}}x_{u_1}x_{u_{2m}}y_{v_1}x_{v_{2m'+1}}}{z}\\
&=g'_{Q,\mathcal{O}}\cdot\frac{\init(\S)}{\init(g'_{Q,\mathcal{O}})}+g_P\cdot\frac{\term(\S)}{\init(g_P)},
\end{align*}
is in standard form if $x_{v_1}>x_{u_1}$ and
\begin{align*}
S(g_P,g'_{Q,\mathcal{O}})&=g_P\cdot\frac{\init(\S)}{\init(g_P)}+g'_{Q,\mathcal{O}}\cdot\frac{\term(\S)}{\init(g'_{Q,\mathcal{O}})},
\end{align*}
is in standard form if $x_{u_1}>x_{v_1}$, where $z=\gcd(\init(g_P),\init(g'_{Q,\mathcal{O}}))$. Now assume that $y_{v_{2m'+1}}\mid\gcd(\init(g_P),\init(g'_{Q,\mathcal{O}}))$, $u_i$ is in the connected component of $P-E(Q)$ containing $v_{2m'+1}$, and $i'$ be such that $\{i,i'\}=\{1,2m\}$. Suppose $x_{v_1}>x_{u_1}$. Then
\begin{itemize}
\item if $P_1$ is the path between $u_i$ and $v_{2m'+1}$, $Q_1$ is the path from $v_1$ to $u_{i'}$ of even length, and $\mathcal{O}_1=\mathcal{O}\cap V(Q_1)$, then 
\begin{align*}
S(g_P,g'_{Q,\mathcal{O}})=g'_{Q_1,\mathcal{O}_1}\cdot\frac{\init(\S)}{\init(g'_{Q_1,\mathcal{O}_1})}+g_{P_1}\cdot\frac{\term(\S)}{\init(g_{P_1})}
\end{align*}
is in standard form.
\item if $P_1$ is the path between $u_{i'}$ and $v_{2m'+1}$, $Q_1$ is the path from $v_1$ to $u_i$ of even length, and $\mathcal{O}_1=\mathcal{O}\cap V(Q_1)$, then
\begin{align*}
S(g_P,g'_{Q,\mathcal{O}})=g'_{Q_1,\mathcal{O}_1}\cdot\frac{\init(\S)}{\init(g'_{Q_1,\mathcal{O}_1})}+g_{P_1}\cdot\frac{\term(\S)}{\init(g_{P_1})}
\end{align*}
is in standard form.
\end{itemize}
Next suppose $x_{u_1}>x_{v_1}$. Then
\begin{itemize}
\item if $P_1$ is the path between $u_{i'}$ and $v_{2m'+1}$ of odd length, $Q_1$ is the path from $u_i$ to $v_1$, and $\mathcal{O}_1=\mathcal{O}\cap V(Q_1)$, then
\begin{align*}
S(g_P,g'_{Q,\mathcal{O}})=g_{P_1}\cdot\frac{\init(\S)}{\init(g_{P_1})}-g'_{Q_1,\mathcal{O}_1}\cdot\frac{\term(\S)}{\init(g'_{Q_1,\mathcal{O}_1})}
\end{align*}
is in standard form.
\item if the path $P_1$ between $u_i$ and $v_{2m'+1}$ has odd length, then 
\begin{align*}
S(g_P,g'_{Q,\mathcal{O}})=g_{P_1}\cdot\frac{\init(\S)}{\init(g_{P_1})}+g'_{Q_1,\mathcal{O}_1}\cdot\frac{\term(\S)}{\init(g'_{Q_1,\mathcal{O}_1})}
\end{align*}
is in standard form if $x_{v_1}>x_{u_{2m}}$, $Q_1$ is the path from $v_1$ to $u_{i'}$, and $\mathcal{O}_1=\mathcal{O}\cap V(Q_1)$, and 
\begin{align*}
S(g_P,g'_{Q,\mathcal{O}})=g_{P_1}\cdot\frac{\init(\S)}{\init(g_{P_1})}-g'_{Q_1,\mathcal{O}_1}\cdot\frac{\term(\S)}{\init(g'_{Q_1,\mathcal{O}_1})}
\end{align*}
is in standard form if $x_{u_{2m}}>x_{v_1}$, $Q_1$ is the path from $u_{i'}$ to $v_1$, and $\mathcal{O}_1=\mathcal{O}\cap V(Q_1)$.
\end{itemize}

Finally we consider the latter case that $x_{u_1}\mid \gcd(\init(g_P),\init(g'_{Q,\mathcal{O}}))$. Observe that $x_{v_{2m'+1}}\nmid \gcd(\init(g_P),\init(g'_{Q,\mathcal{O}}))$. First assume that $x_{v_1}\nmid\gcd(\init(g_P),\init(g'_{Q,\mathcal{O}}))$. Then for a suitable choices of $i,i'$ with $\{i,i'\}=\{1,2m\}$, one of the following two cases occurs:
\begin{itemize}
\item if $x_{v_1}>x_{u_1}$, $Q_1$ is the path from $v_1$ to $v_k$, $Q_2$ is the path from $v_k$ to $v_{2m'+1}$, and $\mathcal{O}_t=\mathcal{O}\cap V(Q_t)$ for $t=1,2$, then
\begin{align*}
S(g_P,g'_{Q,\mathcal{O}})=&\frac{z_Pz'_Qy_{u_1}y_{u_{2m}}x_{v_1}y_{v_{2m'+1}}}{zx_{u_i}}+\frac{z_Pz'_{Q,\mathcal{O}}x_{u_{i'}}y_{v_1}x_{v_{2m'+1}}}{z}\\
=&g'_{Q_1,\mathcal{O}_1}\cdot\frac{\init(\S)}{\init(g'_{Q_1,\mathcal{O}_1})}+g_P\cdot\frac{\init(\S)}{\init(g_P)}+g'_{Q_2,\mathcal{O}_2}\cdot\frac{\init(\S)y_{v_1}x_{v_k}}{\init(g'_{Q_2,\mathcal{O}_2})x_{v_1}y_{v_k}}
\end{align*}
is in standard form.
\item if $x_{u_1}>x_{v_1}$ with $Q_1$ and $Q_2$ as above, where $u_i=v_k$ for some $k\in\mathcal{O}$, $x_{u_i}z=\gcd(\init(g_P),\init(g'_{Q,\mathcal{O}}))$, and $x_{v_k}>x_{2m'+1}$ w.l.o.g., then
\begin{align*}
S(g_P,g'_{Q,\mathcal{O}})=g_P\cdot\frac{\init(\S)}{\init(g_P)}+g'_{Q_1,\mathcal{O}_1}\cdot\frac{\term(\S)}{\init(g'_{Q_1,\mathcal{O}_1})}+g'_{Q_2,\mathcal{O}_2}\cdot\frac{\term(\S)y_{v_1}x_{v_k}}{\init(g'_{Q_2,\mathcal{O}_2})x_{v_1}y_{v_k}}
\end{align*}
is in standard form.
\end{itemize}

Next, if $x_{v_1}\mid \gcd(\init(g_P),\init(g'_{Q,\mathcal{O}}))$, then $x_{v_1}=x_{u_i}$ and 
\begin{align*}
S(g_P,g'_{Q,\mathcal{O}})=g'_{Q,\mathcal{O}}\cdot\frac{\init(\S)}{\init(g'_{Q,\mathcal{O}})}+g_P\cdot\frac{\term(\S)}{\init(g_P)}
\end{align*}
is in standard form.

\textbf{Case (ii,ii)}. Let $P\colon u_1,\ldots,u_{2m+1}$ and $Q\colon v_1,\ldots,v_{2m'+1}$ be paths in $T$ of even lengths, $\mathcal{O}_P$ and $\mathcal{O}_Q$ be odd subsets of $P$ and $Q$, respectively, $x_{u_1}\geq x_{v_1}$ by symmetry, and $\S:=S(g'_{P,\mathcal{O}_P},g'_{Q,\mathcal{O}_Q})$. First observe that $\S=0$ when $P=Q$. Hence, in what follows, we assume that $P\neq Q$. Let $z^*:=\gcd(\init(g'_{P,\mathcal{O}_P}),\init(g'_{Q,\mathcal{O}_Q}))$. We have the following four cases:

Case 1. $z$ is coprime to $y_{u_{2m+1}}y_{v_{2m'+1}}$. Then
\[S(g'_{P,\mathcal{O}_P},g'_{Q,\mathcal{O}_Q})=\frac{z'_{P,\mathcal{O}_P}z'_{Q,\mathcal{O}_Q}x_{u_1}y_{v_{2m+1}}y_{v_1}x_{v_{2m'+1}}}{z^*}-\frac{z'_{P,\mathcal{O}_P}z'_{Q,\mathcal{O}_Q}y_{u_1}x_{u_{2m+1}}x_{v_1}y_{v_{2m'+1}}}{z^*}\]
and one of the followings occurs:
\begin{itemize}
\item Subcase 1.1. if $v_1\neq u_1\notin\mathcal{O}_Q$, then 
\begin{align*}
S(g'_{P,\mathcal{O}_P},g'_{Q,\mathcal{O}_Q})=g'_{P,\mathcal{O}_P}\cdot\frac{\init(\S)}{\init(g'_{P,\mathcal{O}_P})}-g'_{Q,\mathcal{O}_Q}\cdot\frac{\term(\S)}{\init(g'_{Q,\mathcal{O}_Q})},
\end{align*}
is in standard form.
\item Subcase 1.2. if $v_1=u_1\notin\mathcal{O}_Q$, $R$ is the path from $u_{2m+1}$ to $v_{2m'+1}$ (assuming that $x_{u_{2m+1}}>x_{v_{2m'+1}}$ by symmetry), and $\mathcal{O}_R=(\mathcal{O}_P\cap V(R))\cup(\mathcal{O}_Q\cap V(R))$, then
\begin{align*}
S(g'_{P,\mathcal{O}_P},g'_{Q,\mathcal{O}_Q})=g'_{R,\mathcal{O}_R}\cdot\frac{\init(\S)}{\init(g'_{R,\mathcal{O}_R})},
\end{align*}
is in standard form.
\item Subcase 1.3. if $u_1\in\mathcal{O}_Q$, then 
\begin{align*}
S(g'_{P,\mathcal{O}_P},g'_{Q,\mathcal{O}_Q})=g'_{P,\mathcal{O}_P}\cdot\frac{\init(\S)}{\init(g'_{P,\mathcal{O}_P})}-g'_{Q,\mathcal{O}_Q\setminus\{u_1\}}\cdot\frac{\term(\S)}{\init(g'_{Q,\mathcal{O}_Q\setminus\{u_1\}})},
\end{align*}
is in standard form.
\end{itemize}

Case 2. $z^*=zy_{u_{2m+1}}$ is coprime to $y_{v_{2m'+1}}$. Then
\[S(g'_{P,\mathcal{O}_P},g'_{Q,\mathcal{O}_Q})=\frac{z'_{P,\mathcal{O}_P}z'_{Q,\mathcal{O}_Q}x_{u_1}y_{v_1}x_{v_{2m'+1}}}{z}-\frac{z'_{P,\mathcal{O}_P}z'_{Q,\mathcal{O}_Q}y_{u_1}x_{u_{2m+1}}x_{v_1}y_{v_{2m'+1}}}{zy_{u_{2m+1}}}\]
and one of the followings occurs:
\begin{itemize}
\item Subcase 2.1. $u_1=v_1$. Let $Q_1$ be the path between $u_{2m+1}$ and $v_{2m'+1}$ and $\mathcal{O}_{Q_1}=\mathcal{O}_Q\cap V(Q_1)$. Then 
\[S(g'_{P,\mathcal{O}_P},g'_{Q,\mathcal{O}_Q})=\pm g'_{Q_1,\mathcal{O}_{Q_1}}\cdot\frac{\init(\S)}{\init(g'_{Q_1,\mathcal{O}_{Q_1}})}\]
is in standard form with sign being $+$ or $-$ according to $x_{v_{2m'+1}}>x_{u_{2m+1}}$ or not.
\item Subcase 2.2. $u_1\neq v_1$, $u_1\notin\mathcal{O}_Q$, and $v_1\notin\mathcal{O}_P$. If $\mathcal{O}_Q\cup\{u_{2m+1}\}$ is an odd subset of $Q$, then
\[S(g'_{P,\mathcal{O}_P},g'_{Q,\mathcal{O}_Q})=g'_{P,\mathcal{O}_P}\cdot\frac{\init(\S)}{\init(g'_{P,\mathcal{O}_P})}-g'_{Q,\mathcal{O}_Q}\cdot\frac{\term(\S)}{\init(g'_{Q,\mathcal{O}_Q})}\]
is in standard form. Now assume that $\mathcal{O}_Q\cup\{u_{2m+1}\}$ is not an odd subset of $Q$. Let $\mathcal{O}_Q=\{v_{i_1},\ldots,v_{i_m}\}$ with $i_1<\cdots<i_m$, and $s$ be such that $u_{2m+1}$ lies on the path between $v_{i_s}$ and $v_{i_{s+1}}$. Let $P_1$ be the path between $v_{i_s}$ and $u_{2m+1}$, $P_2$ be the path between $u_{2m+1}$ and $v_{i_{s+1}}$, $Q_1$ be the path between $v_1$ and $v_{i_s}$, and $Q_2$ be the path between $v_{i_{s+1}}$ and $v_{2m'+1}$. Notice that $P_1$ and $P_2$ are paths of odd lengths and $Q_1$ and $Q_2$ are paths of even lengths, and that $\mathcal{O}_{Q_i}:=\mathcal{O}_Q\cap V(Q_i)$ is an odd subset of $V(Q_i)$ for $i=1,2$. In the case where $Q_i$ ($i=1,2$) is a path of length zero, we define $g'_{Q_i,\mathcal{O}_{Q_i}}:=1$. Also, put $\varepsilon_i:=1-\delta_{|V(Q_i)|,1}$ for $i=1,2$. Then
\begin{align*}
S(g'_{P,\mathcal{O}_P},g'_{Q,\mathcal{O}_Q})=&g'_{P,\mathcal{O}_P}\cdot\frac{\init(\S)}{\init(g'_{P,\mathcal{O}_P})}
-g_{P_1}\cdot\frac{\term(\S)}{\init(g_{P_1})}
+\varepsilon_1 g'_{Q_1,\mathcal{O}_{Q_1}}\cdot\frac{\term(\S)y_{v_{i_s}}y_{u_{2m+1}}}{\init(g'_{Q_1,\mathcal{O}_{Q_1}})x_{v_{i_s}}x_{u_{2m+1}}}\\
&+g_{P_2}\cdot\frac{\init(\S)y_{u_1}x_{u_{2m+1}}}{\init(g_{P_2})x_{u_1}y_{u_{2m+1}}}
\pm \varepsilon_2 g'_{Q_2,\mathcal{O}_{Q_2}}\cdot\frac{\term(\S)y_{v_{i_s}}y_{u_{2m+1}}y_{v_1}x_{v_{i_s}}}{\init(g'_{Q_2,\mathcal{O}_{Q_2}})x_{v_{i_s}}x_{u_{2m+1}}x_{v_1}y_{v_{i_s}}}
\end{align*}
is in standard form with sign being $+$ or $-$ according to $Q_1$ being in direction of $Q$ or not.
\item Subcase 2.3. $u_1\neq v_1$, $u_1\in\mathcal{O}_Q$, and $v_1\notin\mathcal{O}_P$. Let $\mathcal{O}'_Q:=(\mathcal{O}_Q\cup\{u_{2m+1}\})\setminus\{u_1\}$. Then 
\[S(g'_{P,\mathcal{O}_P},g'_{Q,\mathcal{O}_Q})=g'_{P,\mathcal{O}_P}\cdot\frac{\init(\S)}{\init(g'_{P,\mathcal{O}_P})}-g'_{Q,\mathcal{O}'_Q}\cdot\frac{\term(\S)}{\init(g'_{Q,\mathcal{O}'_Q})}\]
is in standard form.
\item Subcase 2.4. $u_1\neq v_1$, $u_1\notin\mathcal{O}_Q$, and $v_1\in\mathcal{O}_P$. Let $\mathcal{O}'_P:=\mathcal{O}_P\setminus\{v_1\}$ and $\mathcal{O}'_Q:=\mathcal{O}_Q\cup\{u_{2m+1}\}$. Then 
\[S(g'_{P,\mathcal{O}_P},g'_{Q,\mathcal{O}_Q})=g'_{P,\mathcal{O}'_P}\cdot\frac{\init(\S)}{\init(g'_{P,\mathcal{O}'_P})}-g'_{Q,\mathcal{O}'_Q}\cdot\frac{\term(\S)}{\init(g'_{Q,\mathcal{O}'_Q})}\]
is in standard form.
\end{itemize}
Case 3. $z^*=zy_{v_{2m'+1}}$ is coprime to $y_{u_{2m+1}}$. Then
\[S(g'_{P,\mathcal{O}_P},g'_{Q,\mathcal{O}_Q})=\frac{z'_{P,\mathcal{O}_P}z'_{Q,\mathcal{O}_Q}x_{u_1}y_{u_{2m+1}}y_{v_1}x_{v_{2m'+1}}}{zy_{v_{2m'+1}}}-\frac{z'_{P,\mathcal{O}_P}z'_{Q,\mathcal{O}_Q}y_{u_1}x_{u_{2m+1}}x_{v_1}}{z}\]
and one of the followings occurs:
\begin{itemize}
\item Subcase 3.1. $u_1=v_1$. Let $P_1$ be the path between $u_{2m+1}$ and $v_{2m'+1}$, and $\mathcal{O}_{P_1}=\mathcal{O}_Q\cap V(P_1)$. Then 
\[S(g'_{P,\mathcal{O}_P},g'_{Q,\mathcal{O}_Q})=\pm g'_{P_1,\mathcal{O}_{P_1}}\cdot\frac{\init(\S)}{\init(g'_{P_1,\mathcal{O}_{P_1}})}\]
is in standard form with sign being $+$ or $-$ according to $x_{v_{2m'+1}}>x_{u_{2m+1}}$ or not.
\item Subcase 3.2. $u_1\neq v_1$, $u_1\notin\mathcal{O}_Q$, and $v_1\notin\mathcal{O}_P$. If $\mathcal{O}'_P:=\mathcal{O}_P\cup\{v_{2m'+1}\}$ is an odd subset of $P$, then
\[S(g'_{P,\mathcal{O}_P},g'_{Q,\mathcal{O}_Q})=g'_{P,\mathcal{O}'_P}\cdot\frac{\init(\S)}{\init(g'_{P,\mathcal{O}'_P})}-g'_{Q,\mathcal{O}_Q}\cdot\frac{\term(\S)}{\init(g'_{Q,\mathcal{O}_Q})}\]
is in standard form. Now assume that $\mathcal{O}_P\cup\{v_{2m'+1}\}$ is not an odd subset of $P$. Let $\mathcal{O}_P=\{u_{i_1},\ldots,u_{i_m}\}$ with $i_1<\cdots<i_m$, and $s$ be such that $v_{2m'+1}$ lies on the path between $u_{i_s}$ and $u_{i_{s+1}}$. Let $P_1$ be the path between $u_{i_s}$ and $v_{2m'+1}$, $P_2$ be the path between $v_{2m'+1}$ and $u_{i_{s+1}}$, and $P_3$ be the path between the last member $v_{\max}$ of $\{v_1\}\cup\mathcal{O}_Q$ and $u_{i_{s+1}}$. Notice that $P_1$, $P_2$, and $P_3$ are paths of odd lengths. Then
\begin{align*}
S(g'_{P,\mathcal{O}_P},g'_{Q,\mathcal{O}_Q})=&g_{P_2}\cdot\frac{\init(\S)}{\init(g_{P_2})}-g'_{P,\mathcal{O}_P\setminus\{u_{i_{s+1}}\}}\cdot\frac{\init(\S)y_{u_{i_{s+1}}}}{\init(g'_{P,\mathcal{O}_P\setminus\{u_{i_{s+1}}\}})x_{u_{i_{s+1}}}}\\
&-g'_{Q,\mathcal{O}_Q}\cdot\frac{\term(\S)}{\init(g'_{Q,\mathcal{O}_Q})}-g_{P_2}\cdot\frac{\term(\S)y_{v_1}x_{v_{2m'+1}}}{\init(g_{P_2})x_{v_1}y_{v_{2m'+1}}}
\end{align*}
is in standard form if $x_{u_1}>x_{u_{i_{s+1}}}$ and $x_{v_1}>x_{u_{i_{s+1}}}$, 
\begin{align*}
S(g'_{P,\mathcal{O}_P},g'_{Q,\mathcal{O}_Q})=&g_{P_2}\cdot\frac{\init(\S)}{\init(g_{P_2})}-g'_{P,\mathcal{O}_P\setminus\{u_{i_{s+1}}\}}\cdot\frac{\init(\S)y_{u_{i_{s+1}}}}{\init(g'_{P,\mathcal{O}_P\setminus\{u_{i_{s+1}}\}})x_{u_{i_{s+1}}}}\\
&-g_{P_3}\cdot\frac{\term(\S)}{\init(P_3)}+\varepsilon g'_{Q,\mathcal{O}_Q\setminus\{v_{\max}\}}\cdot\frac{\term(\S)y_{v_{\max}}y_{v_{i_{s+1}}}}{\init(g'_{Q,\mathcal{O}_Q\setminus\{v_{\max}\}})x_{v_{\max}}x_{v_{i_{s+1}}}}
\end{align*}
is in standard form if $x_{u_1}>x_{u_{i_{s+1}}}$ and $x_{v_1}<x_{u_{i_{s+1}}}$ with $\varepsilon=1-\delta_{\mathcal{O}_Q,\varnothing}$, 
\begin{align*}
S(g'_{P,\mathcal{O}_P},g'_{Q,\mathcal{O}_Q})=&g_{P_2}\cdot\frac{\init(\S)}{\init(g_{P_2})}-g'_{Q,\mathcal{O}_Q}\cdot\frac{\term(\S)}{\init(g'_{Q,\mathcal{O}_Q})}\\
&-g_{P_2}\cdot\frac{\term(\S)y_{v_1}x_{v_{2m'+1}}}{\init(g_{P_2})x_{v_1}y_{v_{2m'+1}}}
-g'_{P,\mathcal{O}_P\setminus\{u_{i_{s+1}}\}}\cdot\frac{\init(\S)y_{u_{i_{s+1}}}}{\init(g'_{P,\mathcal{O}_P\setminus\{u_{i_{s+1}}\}})x_{u_{i_{s+1}}}}
\end{align*}
is in standard form if $x_{u_1}<x_{u_{i_{s+1}}}$ and $x_{v_1}>x_{u_{i_{s+1}}}$, 
\begin{align*}
S(g'_{P,\mathcal{O}_P},g'_{Q,\mathcal{O}_Q})=&g_{P_2}\cdot\frac{\init(\S)}{\init(g_{P_2})}-g_{P_3}\cdot\frac{\term(\S)}{\init(P_3)}-g'_{P,\mathcal{O}_P\setminus\{u_{i_{s+1}}\}}\cdot\frac{\init(\S)y_{u_{i_{s+1}}}}{\init(g'_{P,\mathcal{O}_P\setminus\{u_{i_{s+1}}\}})x_{u_{i_{s+1}}}}\\
&+\varepsilon g'_{Q,\mathcal{O}_Q\setminus\{v_{\max}\}}\cdot\frac{\term(\S)y_{v_{\max}}y_{v_{i_{s+1}}}}{\init(g'_{Q,\mathcal{O}_Q\setminus\{v_{\max}\}})x_{v_{\max}}x_{v_{i_{s+1}}}}
\end{align*}
is in standard form if $x_{u_1}<x_{u_{i_{s+1}}}$ and $x_{v_1}<x_{u_{i_{s+1}}}$ with $\varepsilon=1-\delta_{\mathcal{O}_Q,\varnothing}$.
\item Subcase 3.3. $u_1\neq v_1$, $u_1\in\mathcal{O}_Q$, and $v_1\notin\mathcal{O}_P$. Let $\mathcal{O}'_P:=\mathcal{O}_P\cup\{v_{2m'+1}\}$ and $\mathcal{O}'_Q:=\mathcal{O}_Q\setminus\{u_1\}$. Then 
\[S(g'_{P,\mathcal{O}_P},g'_{Q,\mathcal{O}_Q})=g'_{P,\mathcal{O}'_P}\cdot\frac{\init(\S)}{\init(g'_{P,\mathcal{O}'_P})}-g'_{Q,\mathcal{O}'_Q}\cdot\frac{\term(\S)}{\init(g'_{Q,\mathcal{O}'_Q})}\]
is in standard form.
\item Subcase 3.4. $u_1\neq v_1$, $u_1\notin\mathcal{O}_Q$, and $v_1\in\mathcal{O}_P$. Let $\mathcal{O}'_P:=(\mathcal{O}_P\cup\{v_{2m'+1}\})\setminus\{v_1\}$. Then 
\[S(g'_{P,\mathcal{O}_P},g'_{Q,\mathcal{O}_Q})=g'_{P,\mathcal{O}'_P}\cdot\frac{\init(\S)}{\init(g'_{P,\mathcal{O}'_P})}-g'_{Q,\mathcal{O}_Q}\cdot\frac{\term(\S)}{\init(g'_{Q,\mathcal{O}_Q})}\]
is in standard form.
\end{itemize}
Case 4. $z^*$ is divisible by $y_{u_{2m+1}}$ and $y_{v_{2m'+1}}$. If $u_{2m+1}=v_{2m'+1}$, then 
\begin{align*}
S(g'_{P,\mathcal{O}_P},g'_{Q,\mathcal{O}_Q})&=\frac{z'_{P,\mathcal{O}_P}z'_{Q,\mathcal{O}_Q}x_{u_1}x_{u_{2m+1}}y_{v_1}}{z}-\frac{z'_{P,\mathcal{O}_P}z'_{Q,\mathcal{O}_Q}y_{u_1}x_{u_{2m+1}}x_{v_1}}{z}\\
&=g'_{R,\mathcal{O}_R}\cdot\frac{\init(\S)}{\init(g'_{R,\mathcal{O}_R})},
\end{align*}
where $R$ is the path from $u_1$ to $v_1$ and $\mathcal{O}_R=V(R)\cap(\mathcal{O}_P\cup\mathcal{O}_Q)$. Next assume that $u_{2m+1}\neq v_{2m'+1}$. Then 
\[S(g'_{P,\mathcal{O}_P},g'_{Q,\mathcal{O}_Q})=\frac{z'_{P,\mathcal{O}_P}z'_{Q,\mathcal{O}_Q}x_{u_1}y_{v_1}x_{v_{2m'+1}}}{zy_{v_{2m'+1}}}-\frac{z'_{P,\mathcal{O}_P}z'_{Q,\mathcal{O}_Q}y_{u_1}x_{u_{2m+1}}x_{v_1}}{zy_{u_{2m+1}}}.\]
Clearly, 
\[S(g'_{P,\mathcal{O}_P},g'_{Q,\mathcal{O}_Q})=g'_{P,\mathcal{O}_P\cup\{v_{2m'+1}\}}\cdot\frac{\init(\S)}{\init(g'_{P,\mathcal{O}_P\cup\{v_{2m'+1}\}})}-g'_{Q,\mathcal{O}_Q\cup\{u_{2m+1}\}}\cdot\frac{\term(\S)}{\init(g'_{Q,\mathcal{O}_Q\cup\{u_{2m+1}\}})}\]
is in standard form if $\mathcal{O}_P\cup\{v_{2m'+1}\}$ is an odd subset of $P$. Now assume that $\mathcal{O}_P\cup\{v_{2m'+1}\}$ is not an odd subset of $P$. Let $u_{\max}$ (resp. $v_{\max}$) be the last element of $\{u_1\}\cup\mathcal{O}_P$ (resp. $\{v_1\}\cup\mathcal{O}_Q$) that belongs to the path between $u_1$ and $v_{2m'+1}$ (resp. $v_1$ and $u_{2m+1}$). Also, let $P_1$ be the path between $u_{\max}$ and $v_{2m'+1}$, $P_2$ be the path between $u_1$ and $u_{\max}$, $Q_1$ be the path between $v_{\max}$ and $u_{2m+1}$, and $Q_2$ be the path between $v_1$ and $v_{\max}$. We have four cases to consider:
\begin{itemize}
\item Subcase 4.1. $u_1=u_{\max}$ and $v_1=v_{\max}$. Then
\[S(g'_{P,\mathcal{O}_P},g'_{Q,\mathcal{O}_Q})=g_{P_1}\cdot\frac{\init(\S)}{\init(g_{P_1})}-g_{Q_1}\cdot\frac{\term(\S)}{\init(g_{Q_1})}\]
is in standard form.
\item Subcase 4.2. $u_1=u_{\max}$ and $v_1\neq v_{\max}$. Then
\[S(g'_{P,\mathcal{O}_P},g'_{Q,\mathcal{O}_Q})=g_{P_1}\cdot\frac{\init(\S)}{\init(g_{P_1})}-g_{Q_1}\cdot\frac{\term(\S)}{\init(g_{Q_1})}\pm g'_{Q_2,\mathcal{O}_Q\cap V(Q_2)}\cdot\frac{\term(\S)y_{u_{2m+1}}y_{v_{\max}}}{\init(g'_{Q_2,\mathcal{O}_Q\cap V(Q_2)})x_{u_{2m+1}}x_{v_{\max}}}\]
is in standard form with sign being $+$ or $-$ according to $Q_2$ be in direction of $Q$ or not.
\item Subcase 4.3. $u_1\neq u_{\max}$ and $v_1=v_{\max}$. Then
\[S(g'_{P,\mathcal{O}_P},g'_{Q,\mathcal{O}_Q})=g_{P_1}\cdot\frac{\init(\S)}{\init(g_{P_1})}-g'_{P_2,\mathcal{O}_P\cap V(P_2)}\cdot\frac{\init(\S)y_{u_{\max}}y_{v_{2m'+1}}}{\init(g_{P_2,\mathcal{O}_P\cap V(P_2)})x_{u_{\max}}x_{v_{2m'+1}}}-g_{Q_1}\cdot\frac{\term(\S)}{\init(g_{Q_1})}\]
is in standard form if $x_{u_1}>x_{u_{\max}}$ and
\[S(g'_{P,\mathcal{O}_P},g'_{Q,\mathcal{O}_Q})=g_{P_1}\cdot\frac{\init(\S)}{\init(g_{P_1})}-g'_{P_2,\mathcal{O}_P\cap V(P_2)}\cdot\frac{\term(\S)}{\init(g_{P_2,\mathcal{O}_P\cap V(P_2)})}-g_{Q_1}\cdot\frac{\term(\S)x_{u_1}y_{u_{\max}}}{\init(g_{Q_1})y_{u_1}x_{u_{\max}}}\]
is in standard form if $x_{u_1}<x_{u_{\max}}$.
\item Subcase 4.4. $u_1\neq u_{\max}$ and $v_1\neq v_{\max}$. Then
\begin{align*}
S(g'_{P,\mathcal{O}_P},g'_{Q,\mathcal{O}_Q})=&g_{P_1}\cdot\frac{\init(\S)}{\init(g_{P_1})}-g'_{P_2,\mathcal{O}_P\cap V(P_2)}\cdot\frac{\init(\S)y_{u_{\max}}y_{v_{2m'+1}}}{\init(g_{P_2,\mathcal{O}_P\cap V(P_2)})x_{u_{\max}}x_{v_{2m'+1}}}\\
&-g_{Q_1}\cdot\frac{\term(\S)}{\init(g_{Q_1})}\pm g'_{Q_2,\mathcal{O}_Q\cap V(Q_2)}\cdot\frac{\term(\S)y_{u_{2m+1}}y_{v_{\max}}}{\init(g'_{Q_2,\mathcal{O}_Q\cap V(Q_2)})x_{u_{2m+1}}x_{v_{\max}}}
\end{align*}
is in standard form if $x_{u_1}>x_{u_{\max}}$ and
\begin{align*}
S(g'_{P,\mathcal{O}_P},g'_{Q,\mathcal{O}_Q})=&g_{P_1}\cdot\frac{\init(\S)}{\init(g_{P_1})}-g_{P_2,\mathcal{O}_P\cap V(P_2)}\cdot\frac{\term(\S)}{\init(g_{P_2,\mathcal{O}_P\cap V(P_2)})}-g_{Q_1}\cdot\frac{\term(\S)x_{u_1}y_{u_{\max}}}{\init(g_{Q_1})y_{u_1}x_{u_{\max}}}\\
&\pm g'_{Q_2,\mathcal{O}_Q\cap V(Q_2)}\cdot\frac{\term(\S)x_{u_1}y_{u_{\max}}y_{u_{2m+1}}y_{v_{\max}}}{\init(g'_{Q_2,\mathcal{O}_Q\cap V(Q_2)})y_{u_1}x_{u_{\max}}x_{u_{2m+1}}x_{v_{\max}}}
\end{align*}
is in standard form if $x_{u_1}<x_{u_{\max}}$ with sign being $+$ or $-$ according to $Q_2$ be in direction of $Q$ or not.
\end{itemize}
Finally, as an ideal with square-free initial ideal, $I$ is actually a radical ideal (see also \cite[Theorems 1.1--1.2]{jh-am-ssm-vw}). The proof is complete.
\end{proof}

Let $T$ be a tree with $n$ vertices. A labeling $\ell\colon V(T)\rightarrow [n]$ is called \textit{ascending} if it has a pendant receiving the label $n$ whose removal yields a tree with ascending labeling. One observes that every tree has an ascending labeling. It is obvious from the definition that every vertex of maximum label in a subgraph of a tree $T$ with ascending labeling is either an isolated vertex or a pendant. As a result, every path $P\colon v_1,\ldots,v_m$ of $T$ is unimodal in the sense that
\[\ell(v_1)>\cdots>\ell(v_{k-1})>\ell(v_k)<\ell(v_{k+1})<\cdots<\ell(v_m)\]
for some $k$. In what follows, we rename every vertex $v$ of $T$ with its label $\ell(v)$ and assume without loss of generality that $v=\ell(v)$ for all $v\in V(T)$.
\begin{corollary}\label{Groebner basis of LSS_T(2): T has descending labeling}
Let $T$ be a tree with an ascending labeling. Then 
\[\mathcal{G}:=\{g_P \colon \quad P\emph{ is a path of odd length in }T\}\cup\{g'_{P,\varnothing} \colon \quad P\emph{ is a path of even length in }T\}\]
is a Gr\"{o}bner basis for $L_T^{\KK}(2)$ with respect to the lexicographic ordering induced by $x_1>\cdots>x_n>y_1>\cdots>y_n$.
\end{corollary}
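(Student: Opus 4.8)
The plan is to deduce the corollary directly from Theorem~\ref{Groebner basis of LSS_T(2)}. Writing $\mathcal{G}$ for the full Gr\"obner basis produced there and $\mathcal{G}'$ for the smaller set in the statement, we have $\mathcal{G}'\subseteq\mathcal{G}\subseteq I$, so by the very definition of a Gr\"obner basis it suffices to check that the initial monomials of the members of $\mathcal{G}'$ already generate $\init_\leq(I)=(\init_\leq(h)\colon h\in\mathcal{G})$. Thus the whole task reduces to showing that every ``extra'' generator $g'_{P,\mathcal{O}}$ with $\mathcal{O}\neq\varnothing$ is redundant, i.e. that $\init_\leq(g'_{P,\mathcal{O}})$ lies in the monomial ideal generated by the initial monomials of $\mathcal{G}'$; in fact I would show it is divisible by $\init_\leq(g'_{Q,\varnothing})$ for a suitable even subpath $Q$ of $P$.

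The mechanics are as follows. First I would record that for a path $P\colon v_1,\ldots,v_{2m+1}$ of even length with $v_1<v_{2m+1}$ one has $x_{v_1}y_{v_{2m+1}}>x_{v_{2m+1}}y_{v_1}$ in the given lexicographic order (since $v_1<v_{2m+1}$ forces $x_{v_1}>x_{v_{2m+1}}$), so that
\[
\init_\leq(g'_{P,\mathcal{O}})=x_{v_1}y_{v_{2m+1}}\Bigl(\prod_{v\in\mathcal{O}}x_v\Bigr)\Bigl(\prod_{v\in\{v_2,\ldots,v_{2m}\}\setminus\mathcal{O}}y_v\Bigr).
\]
The key geometric input is that, because $T$ carries an ascending labeling, $P$ is unimodal; combined with $v_1<v_{2m+1}$ this forces $v_{2m+1}$ to be the vertex of largest label on $P$, so every interior vertex of $P$ has label strictly below $v_{2m+1}$. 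Now, given a nonempty odd subset $\mathcal{O}$, let $v_s$ be its member of largest index (so $s$ is odd, $3\le s\le 2m-1$) and let $Q$ be the subpath $v_s,v_{s+1},\ldots,v_{2m+1}$: it has even length $2m+1-s\ge 2$, and $v_s<v_{2m+1}$ by the previous sentence, so $Q$ is oriented as required in Theorem~\ref{Groebner basis of LSS_T(2)}(ii) and $\init_\leq(g'_{Q,\varnothing})=x_{v_s}y_{v_{s+1}}\cdots y_{v_{2m+1}}$. I would then check that this divides $\init_\leq(g'_{P,\mathcal{O}})$: the factor $x_{v_s}$ is there because $v_s\in\mathcal{O}$; the factor $y_{v_{2m+1}}$ is there by inspection; and for $s<i\le 2m$ the factor $y_{v_i}$ is there because maximality of $s$ gives $v_i\notin\mathcal{O}$, hence $v_i\in\{v_2,\ldots,v_{2m}\}\setminus\mathcal{O}$. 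Since $g'_{Q,\varnothing}\in\mathcal{G}'$, this completes the reduction, and therefore $(\init_\leq(h)\colon h\in\mathcal{G})=(\init_\leq(h)\colon h\in\mathcal{G}')$; as $\mathcal{G}'\subseteq I$, this is exactly the claim that $\mathcal{G}'$ is a Gr\"obner basis.

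I do not expect a genuine obstacle here: the argument is a short piece of bookkeeping with initial monomials, and the only step that needs care — and the only step that uses the hypothesis on $T$ — is the unimodality/orientation point, namely that under an ascending labeling the larger endpoint of an even path dominates all of its interior vertices. This is precisely the feature responsible for the collapse of the Gr\"obner basis to the simple form $\mathcal{G}'$, and it is what would break for a generic vertex ordering, so I would make sure to state and justify it explicitly before running the divisibility check.
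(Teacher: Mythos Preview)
Your argument is correct and takes a genuinely different route from the paper's. The paper re-verifies Buchberger's criterion for the smaller set $\mathcal{G}'$ directly: it observes that in the standard-form reductions written out in the proof of Theorem~\ref{Groebner basis of LSS_T(2)} for cases (i,i) and (i,ii), when $\mathcal{O}_Q=\varnothing$ the auxiliary polynomials used all have empty odd subset, and then it handles the remaining $S(g'_{P,\varnothing},g'_{Q,\varnothing})$ pairs by producing explicit standard forms in two residual subcases (and ruling out a third by a label inequality). Your approach instead takes the full Gr\"obner basis $\mathcal{G}$ as a black box and shows that every extra leading monomial $\init_\leq(g'_{P,\mathcal{O}})$ with $\mathcal{O}\neq\varnothing$ is already a multiple of some $\init_\leq(g'_{Q,\varnothing})$, so that $(\init_\leq(h):h\in\mathcal{G}')=(\init_\leq(h):h\in\mathcal{G})=\init_\leq(I)$.

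Your route is shorter and cleaner: it isolates exactly where the ascending-labeling hypothesis is used (forcing $v_{2m+1}=\max(P)$ so that the tail subpath $Q$ is correctly oriented), and it avoids walking back through the case analysis of Theorem~\ref{Groebner basis of LSS_T(2)}. The paper's route has the mild advantage of being more self-contained in the sense that it exhibits the actual $S$-polynomial reductions for $\mathcal{G}'$, but at the cost of repeating machinery from the theorem. Both arguments hinge on the same unimodality fact; you simply exploit it at the level of initial monomials rather than at the level of $S$-pairs.
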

\begin{proof}
First observe that for any path in $T$ one of the end points has the maximum label among the other vertices of the path. From the proof of Theorem \ref{Groebner basis of LSS_T(2)} one observes that $S(g_P,g_Q)$ and $S(g_P,g'_{Q',\varnothing})$ reduce to zero modulo $\mathcal{G}$ for all paths $P$ and $Q$  not both of type (ii). Now, let $P\colon u_1,\ldots,u_{2m+1}$ and $Q\colon v_1,\ldots,v_{2m'+1}$ be paths of type (ii) with $\mathcal{O}_P=\mathcal{O}_Q=\varnothing$. Following the proof of Theorem \ref{Groebner basis of LSS_T(2)}, $S(g'_{P,\varnothing},g'_{Q,\varnothing})$ reduces to zero modulo $\mathcal{G}$ except possibly for the following two cases, which we consider independently:

(a) $\gcd(\init(g'_{P,\varnothing}),\init(g'_{Q,\varnothing}))=zy_{v_{2m'+1}}$ is coprime to $y_{u_{2m+1}}$ and $u_1\neq v_1$. As $v_{2m'+1}$ lies on $P$, we have $v_{2m'+1}<u_{2m+1}$. Let $P_1$ be the path from $v_{2m'+1}$ to $u_{2m+1}$ and $R_1$ be the path from $u_1$ to $v_1$. Also, let $P_2$ be the path between $u_1$ and $v_{2m'+1}$ and $R_2$ be the path between $v_1$ and $u_{2m+1}$. Then either $|V(P_1)|$ is odd and 
\[S(g'_{P,\varnothing},g'_{Q,\varnothing})=g'_{P_1,\varnothing}\cdot\frac{\init(\S)}{\init(g'_{P_1,\varnothing})}+g'_{R_1,\varnothing}\cdot\frac{\init(\S)x_{u_{2m+1}}y_{v_{2m'+1}}}{\init(g'_{P_1,\varnothing})y_{u_{2m+1}}x_{v_{2m'+1}}}\]
is in standard form, or $|V(P_1)|$ is even and 
\[S(g'_{P,\varnothing},g'_{Q,\varnothing})=g_{P_2}\cdot\frac{\init(\S)}{\init(g_{P_2})}+g_{R_2}\cdot\frac{\term(\S)}{\init(g_{R_2})}\]
is in standard form.

(b) $\gcd(\init(g'_{P,\varnothing}),\init(g'_{Q,\varnothing}))$ is divisible by both $y_{u_{2m+1}}$ and $y_{v_{2m'+1}}$, where $u_{2m+1}\neq v_{2m'+1}$. Since $v_{2m'+1}$ lies on $P$ and $u_{2m+1}$ lies on $Q$, we obtain $v_{2m'+1}<u_{2m+1}$ and $u_{2m+1}<v_{2m'+1}$, which is impossible.
\end{proof}
\begin{example}
Utilizing Corollary \ref{Groebner basis of LSS_T(2): T has descending labeling}, one observes that
\begin{itemize}
\item[(i)]if $S_n$ be the star graph of order $n$ with central vertex labeled with $2$, then
\[\mathcal{G}:=\{x_1x_i+y_1y_i\colon 2\leq i\leq n\}\cup\{x_iy_1y_j-x_jy_1y_i:2\leq i<j \leq n\}\]
is a reduced Gr\"{o}bner basis for $L_{S_n}^{\KK}(2)$, and
\item[(ii)]if $P_n:1,\ldots,n$ be the path of order $n$, then 
\[\mathcal{G}:=\{x_ix_{i+1}+y_iy_{i+1}:1\leq i\leq n-1\}\cup\{x_iy_{i+1}y_{i+2}-y_iy_{i+1}x_{i+2}:1\leq i \leq n-2\}\]
is a reduced Gr\"{o}bner basis for $L_{P_n}^{\KK}(2)$.
\end{itemize}
\end{example}
\section{Krull dimension of LSS-ideals of trees}
The aim of this section is to present a formula for the Hilbert series of LSS-ideals of trees and the Krull dimension of the quotient ring. To this end, we use the fact that $S/I$ and $S/\init_\leq(I)$ share the same Hilbert series for all monomial orderings $\leq$ (see \cite[Proposition 2.2.5]{jh-th}). In this regard, Corollary \ref{Groebner basis of LSS_T(2): T has descending labeling} provides us with a simple description of $\init_\leq(I)$. Since $\init_\leq(L_T^\KK(2))$ is a square-free monomial ideal, it is the Stanley-Reisner ideal of a simplicial complex.

A collection $\Delta$ of subsets of a set $V=\{v_{1}, \ldots, v_{n} \}$ is a \textit{simplicial complex} if
\begin{itemize}
\item[(a)] $\{ v_{i} \} \in \Delta $ for all $i$;
\item[(b)] if $F\in \Delta$, then all subsets of $F$ are also in $\Delta$ (including the empty set).
\end{itemize}
The elements of $\Delta$ are called \textit{faces} and the maximal faces under inclusion are called \textit{facets} of $\Delta$. A subset $F\subseteq [n]$ is called a \textit{non-face} of $\Delta$ if $F\notin \Delta$.

The \textit{dimension} of a face $F$ is $\dim F = |F|-1$, where $|F|$ denotes the cardinality of $F$ and the \textit{dimension} of $\Delta$, $\dim(\Delta)$, is defined as
\[\dim (\Delta) = \max\{\dim F \colon F \in \Delta \}.\]
Let $\Delta$ be a simplicial complex of dimension $d-1$ on the vertex set $V$ and  $f_i = f_i(\Delta) $ denote the number of faces of $\Delta$ of dimension $i$. Thus, in particular $f_{-1} = 1$ and $f_0 = n$. The sequence
\begin{equation*}
\textbf{f}(\Delta) := \left( f_0, \ldots, f_{d-1} \right)
\end{equation*}
is called the $\textbf{f}$-\textit{vector} of $\Delta$.

To each simplicial complex $\Delta$ on the vertex set $V$ we associate a square-free monomial ideal $I_\Delta\subset\KK[x_1,\ldots,x_n]$, called the Stanley-Reisner ideal of $\Delta$, defined as follows:
\[I_\Delta=(\prod_{i \in F} x_i \colon \, F\notin \Delta ).\]

Now, let $R:=\KK[x_1,\ldots,x_n]$ and $J\subseteq (x_1,\ldots,x_n)^2$ be a square-free monomial ideal. Let $\Supp(u):=\{i\in[n]\colon x_i\mid u\}$ for any monomial $u\in R$ and let $\Delta(J)$ be the set of all subsets $A$ of $[n]$ for which $\Supp(u)\nsubseteq A$ for all $u\in\mathcal{G}(J)$. Then $\Delta(J)$ is a simplicial complex and $I_{\Delta(J)}=J$, hence
\begin{equation}\label{Hilb(R/I,t)}
\Hilb\left(\frac{R}{J},t\right)=\sum_{i=0}^d f_{i-1}\left(\frac{t}{1-t}\right)^i,
\end{equation}
where $d=\dim\Delta(J)+1$ and $f_{i-1}=f_{i-1}(\Delta(J))$ for $i=0,\ldots,d$ (see \cite[Theorem 5.1.7]{wb-jh}). In order to compute $\Hilb(R/J,t)$, we note from the exact sequence 
\[0\longrightarrow\frac{R}{J\colon u}(-d)\overset{\cdot u}{\longrightarrow}\frac{R}{J}\longrightarrow\frac{R}{J+(u)}\longrightarrow0\]
that
\begin{equation}\label{Hilb(S/I)=t^dHilb(S/I:u)+Hilb(S/I+(u))}
\Hilb\left(\frac{R}{J}, t\right)=t^d\Hilb\left(\frac{R}{J\colon u}, t\right)+\Hilb\left(\frac{R}{J+(u)}, t\right).
\end{equation}
For any monomial $u\in R$, define
\[J\odot_i u:=\begin{cases}
J\colon u,&i=0,\\
J+(u),&i=1.
\end{cases}\]
Accordingly, for any $f\in\{0,1\}^{[n]}$ we set
\[J_f:=J\odot_{f(1)}x_1\odot\cdots\odot_{f(n)}x_n\]
and put $A_f:=f^{-1}(0)$ and $B_f:=f^{-1}(1)$.  Clearly,
\[J_f=\begin{cases}
(x_i\colon i\in B_f),&A_f\in\Delta(J),\\
(1),&\text{otherwise},
\end{cases}\]
that is $\Delta(J)=\{A_f\subseteq[n]\colon J_f\neq(1)\}$.
\begin{notation}
In what follows, we use the following notation:
\begin{itemize}
\item $S:=\KK[x_1,\ldots,x_n,y_1,\ldots,y_n]$ is a polynomial ring.
\item For subsets $V\subseteq[n]$, 
\[(x_V):=(x_i\colon i\in V)\quad\text{and}\quad(y_V):=(y_i\colon i\in V).\]
\item Let $T$ be a tree on $[n]$ and $\ell\colon V(T)\rightarrow[n]$ be an ascending labeling of $T$. Then we use the lexicographic ordering $\leq:=\leq_\ell$ induced by
\[x_{\ell(1)}>\cdots>x_{\ell(n)}>y_{\ell(1)}>\cdots>y_{\ell(n)}\]
without further reference. As in the previous section, we further assume that $v=\ell(v)$ for all $v\in V(T)$, for convenience.
\item The set of all paths of a forest $F$ through a vertex $u$ as a middle vertex is denoted by $\paths(F, u)$. For any path $P$ in $F$, let
\[g_P^*:=\begin{cases}
g_P,&|P|\ \text{is even},\\
g'_{P,\varnothing},&|P|\ \text{is odd}.\\
\end{cases}\]
Also, we define $\End(P)$ to be the set of end vertices of $P$ and $\End_x(P)$ to be the set of end vertices of $P$ labeled by $x$ in $g_P^*$. 
\item Let $J\subseteq S$ be a monomial. For any $f\in\{0,1\}^{[n]}$ and $k\in[n]$ we set
\[J_{k,f}:=J\odot_{f(1)}x_1\odot\cdots\odot_{f(k)}x_k\]
and put $A_{k,f}:=f^{-1}(0)\cap[k]$ and $B_{k,f}:=f^{-1}(1)\cap[k]$.
Also, for any $g\in\{0,1\}^{[n]}$ and $k\in[n]$ we set
\[J^{k,g}:=J\odot_{g(n)}y_n\odot\cdots\odot_{g(k)}y_k\]
and put $A'_{k,g}:=g^{-1}(0)\cap[n-k]^c$ and $B'_{k,g}:=g^{-1}(1)\cap[n-k]^c$. 
\item For any $f\in\{0,1\}^{[n]}$, let $\edges_f(T)$ be the set of all paths $u,v$ of length one such that $u\sim v\sim a$ for some $a\in A_{n,f}$ and $u,v>a$. Also, let $\paths'_f(T)$ be the set of all paths $P$ of $T$ satisfying the following conditions:
\begin{itemize}
\item[(1)]$\min(P)\in B_{n,f}$,
\item[(2)]$\min(P)\cap\End(P)=\varnothing$,
\item[(3)]$\End_x(P)\subseteq A_{n,f}$,
\item[(4)]$V(P)\setminus\End_2(P)\subseteq B_{n,f}$,
\end{itemize}
where by $\min(P)$ we mean the vertex of $P$ with minimum label, and $\End_i(P)$ ($i\geq0$) denotes the set of vertices of $P$ at distance at most $i$ from an end point of $P$. Put
\[\paths^*_f(T):=\edges_f(T)\cup\paths_f(T),\]
where
\[\paths_f(T):=\{P-\End_x(P)\colon P\in\paths'_f(T)\}\]
in which $\End_x(P)$ is the set of all (end) vertices $v$ of $P$ for which $x_v\mid \init_\leq(g^*_P)$. Finally, for any $V\subseteq [n]$, let $(y_V)^\vee$ be the ideal of $S$ generated by the product of $y_v$ for all $v\in V$.
\end{itemize}
\end{notation}

For a given ascending labeling $\ell$ of $T$, let $\Delta(T,\ell)$ be the simplicial complex on $[2n]$ for which $I_{\Delta(T,\ell)}=\init_\leq(L_T^\KK(2))$ as ideals of $S$. Since $S$ has two sets of indeterminates, namely $x_1,\ldots,x_n$ and $y_1,\ldots,y_n$ we may identify every element of $\Delta(T,\ell)$ as an ordered pair $(A,A')$, where $A,A'\subseteq[n]$ correspond to subsets of $\{x_1,\ldots,x_n\}$ and $\{y_1,\ldots,y_n\}$, respectively. Accordingly, 
\[\Delta(T,\ell)=\{(f^{-1}(0),g^{-1}(0))\colon f,g\in\{0,1\}^{[n]}\ \text{and}\ (I_{n,f})^{n,g}\neq(1)\}.\]
The following lemma helps us to describe $\Delta(T,\ell)$.
\begin{lemma}\label{I_{k,f} and (I_{n,f})^{k,g}}
Let $T$ be a tree on $[n]$ with an ascending labeling $\ell$ and $I=\init_\leq(L_T^\KK(2))$. If $f,g\in\{0,1\}^{[n]}$, then
\[(I_{n,f})^{n,g}=(x_{B_{n,f}})+(x_{N_T(A_{n,f})})^*+(y_{B'_{n,g}})+\sum_{P\in\paths^*_f(T)}(y_{V(P)\setminus A'_{n,g}})^\vee,\]
where 
\[(x_{N_T(A_{k,f})})^*=
\begin{cases}
(x_{N_T(A_{n,f})}),&A_{n,f}\emph{ is independent in }T,\\
(1),&\emph{otherwise}.
\end{cases}\]
\end{lemma}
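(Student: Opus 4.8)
The plan is to collapse the iterated colon‑and‑sum operation into a single colon together with two monomial ideals, and then to match generators. Because colon and sum by \emph{distinct} variables commute on monomial ideals, and $A_{n,f}\sqcup B_{n,f}=A'_{n,g}\sqcup B'_{n,g}=[n]$, a one–line induction on the number of processed variables gives
\[(I_{n,f})^{n,g}=(I:u)+(x_{B_{n,f}})+(y_{B'_{n,g}}),\qquad u:=\Bigl(\prod_{i\in A_{n,f}}x_i\Bigr)\Bigl(\prod_{j\in A'_{n,g}}y_j\Bigr).\]
By Corollary~\ref{Groebner basis of LSS_T(2): T has descending labeling}, $I$ is generated by the monomials $m_P:=\init_\leq(g_P^*)$ as $P$ runs over the paths of $T$, and unwinding the definitions of $g_P$ and $g'_{P,\varnothing}$ shows that $m_P$ has $x$‑support $\End_x(P)$ and $y$‑support $V(P)\setminus\End_x(P)$. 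Hence $I:u$ is generated by the monomials $m_P:u=\prod_{w\in\End_x(P)\cap B_{n,f}}x_w\cdot\prod_{w\in(V(P)\setminus\End_x(P))\cap B'_{n,g}}y_w$, and it remains to prove the two inclusions between this ideal and the asserted right‑hand side.

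For $\supseteq$ I would check that each generator of the right‑hand side lies in $(I_{n,f})^{n,g}$. The monomials $x_i$ ($i\in B_{n,f}$) and $y_j$ ($j\in B'_{n,g}$) are there trivially. If $A_{n,f}$ is independent and $v\in N_T(A_{n,f})$, pick $a\in A_{n,f}$ adjacent to $v$; then $v\notin A_{n,f}$, so $m_{\{a,v\}}:u=x_v\in I:u$. Finally, for $P^*\in\paths^*_f(T)$ I exhibit a path $Q$ of $T$ with $m_Q:u=\prod_{w\in V(P^*)\setminus A'_{n,g}}y_w$: if $P^*=[b,c]\in\edges_f(T)$ with $b\sim c\sim a$, $a\in A_{n,f}$, $b,c>a$, take $Q=[b,c,a]$, whose smaller end is $a$, so that $m_Q=x_a y_b y_c$; if $P^*=P'-\End_x(P')$ with $P'\in\paths'_f(T)$, take $Q=P'$ and use $\End_x(P')\subseteq A_{n,f}$ (condition (3)) to cancel the whole $x$‑part of $m_{P'}:u$.

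For $\subseteq$, each generator $m_P:u$ of $I:u$ is either divisible by some $x_i$ ($i\in B_{n,f}$) or some $y_j$ ($j\in B'_{n,g}$), hence already in $(x_{B_{n,f}})+(y_{B'_{n,g}})$; or else $\End_x(P)\subseteq A_{n,f}$ and $V(P)\setminus\End_x(P)\subseteq A'_{n,g}$, so $m_P\mid u$, $m_P:u=1$, and $(I_{n,f})^{n,g}=(1)$. In this last case I must show the right‑hand side equals $(1)$: if $A_{n,f}$ is not independent this is immediate from $(x_{N_T(A_{n,f})})^*=(1)$; if $A_{n,f}$ is independent, it suffices to produce $P^*\in\paths^*_f(T)$ with $V(P^*)\subseteq A'_{n,g}$, for then $(y_{V(P^*)\setminus A'_{n,g}})^\vee=(1)$.

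This last statement is the crux, and I would prove it using that under an ascending labeling every path of $T$ is unimodal. Orient $P=w_1,\dots,w_m$ with $w_m=\max(P)$; then every interior vertex of $P$, and $w_m$ itself when $|P|$ is odd, lies in $V(P)\setminus\End_x(P)\subseteq A'_{n,g}$, while any $P$‑neighbour of a vertex of $P$ that lies in $A_{n,f}$ must lie in $B_{n,f}$ by independence, and $|P|\ge3$ (else $\End_x(P)$ would be an edge inside $A_{n,f}$). If $\min(P)$ is interior, lies in $B_{n,f}$, and no vertex of $P$ at distance $\ge3$ from both ends lies in $A_{n,f}$, then conditions (1), (2), (4) for $\paths'_f(T)$ all hold (condition (3) being the hypothesis), so $P\in\paths'_f(T)$ and $P^*:=P-\End_x(P)$ works. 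Otherwise there is an offending vertex $a\in A_{n,f}$ on $P$ — namely $\min(P)$ when it is an endpoint or itself lies in $A_{n,f}$, or a vertex deep in the interior — and I use the valley shape of the unimodal profile to exhibit a length‑one sub‑path $[w_{i-1},w_i]$ or $[w_i,w_{i+1}]$ of $P$ lying in $\edges_f(T)$ (the required strict inequalities come free from monotonicity of the labels along $P$) whose two vertices are interior, hence in $A'_{n,g}$; the short‑path degeneracies that arise again force an edge inside $A_{n,f}$. I expect this case analysis — sorting out where $\min(P)$ and the $A_{n,f}$‑vertices sit relative to the unimodal profile, and tracking which sub‑path vertices turn out to be endpoints — to be the only delicate point, the rest being routine bookkeeping with monomial colons.
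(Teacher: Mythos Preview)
Your argument is correct and takes a genuinely different route from the paper's. The paper proceeds by a double induction: it processes the variables $x_1,\ldots,x_n$ one at a time (in that order, so that at each step the vertex of minimum label is removed), maintaining an explicit closed formula for $I_{k,f}$ along the way, then simplifies $I_{n,f}$ by showing that any path whose contribution is not already absorbed by an $\edges_f$-edge must satisfy conditions (1)--(4), and finally runs a second induction over $y_n,\ldots,y_1$ to obtain $(I_{n,f})^{k,g}$. You instead observe that for monomial ideals the operations $J\mapsto J\colon x$ and $J\mapsto J+(x')$ commute for distinct variables, which collapses the whole iteration to $(I\colon u)+(x_{B_{n,f}})+(y_{B'_{n,g}})$ in one stroke; you then compare generators of $I\colon u$ with the right-hand side directly.

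What each approach buys: the paper's step-by-step computation yields the intermediate formulas for $I_{k,f}$ and $(I_{n,f})^{k,g}$ (equations used later only implicitly), at the cost of tracking more notation. Your reduction is cleaner and isolates the only non-trivial content---the unimodal case analysis producing either a $\paths'_f$-path or an $\edges_f$-edge inside $A'_{n,g}$---which is essentially the same analysis the paper carries out in its simplification step (the paragraph beginning ``Let $P$ be a path in the second sum\ldots''). Your sketch of that case analysis is accurate: the boundary sub-cases (e.g.\ $k=3$, $m=4$) are exactly where independence of $A_{n,f}$ forces a contradiction via an edge between $\End_x(P)$ and the offending vertex, and the remaining cases yield an $\edges_f$-edge on the monotone side of the valley whose two vertices avoid $\End_x(P)$. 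One small point worth making explicit in your write-up is the non-independent case of $\supseteq$: if $a,a'\in A_{n,f}$ are adjacent then $m_{\{a,a'\}}\colon u=1$, so the left-hand side is $(1)$ as required.
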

\begin{proof}
First observe that if $J=\init_\leq(F)$ where $F$ is a subforest of $T$ and $u$ is the vertex of $F$ of minimum label, then
\begin{align}
J\colon x_u&=\init_\leq(F\setminus u)+(x_{N_F(u)})+\sum_{v\in N_F(u)}y_v(y_{N_F(v)\setminus u})+\sum_{P\in\paths(F, u)}\init_\leq(g_P^*)\label{J colon xu}\\
&=\init_\leq(F\setminus u)+(x_{N_F(u)})+\sum_{v\in N_F(u)}y_v(y_{N_F(v)\setminus u}),\label{J colon xu:simplified}\\
J+(x_u)&=\init_\leq(F\setminus u)+(x_u)+\sum_{P\in\paths(F, u)}\init_\leq(g_P^*).
\end{align}
In order to obtain \eqref{J colon xu:simplified} from \eqref{J colon xu} we note that every summand $\init_\leq(g_P^*)$ is either a multiple of $x_v$ or a multiple of $y_vy_w$ for some $v\in N_F(u)$ and $w\in N_F(u)\setminus\{u\}$.

If $f\in\{0,1\}^{[n]}$, then a simple inductive argument yields
\begin{align*}
I_{k,f}=&I\odot_{f(1)}x_1\odot\cdots\odot_{f(k)}x_k\\
=&\init_\leq(T\setminus[k])+(x_{B_{k,f}})+(x_{N_T(A_{k,f})})^*+\sum_{\substack{u\sim v\sim a\in A_{k,f}\\u,v>a}}(y_uy_v)\\
&+\sum_{\substack{i\in B_{k,f}\\P\in\paths(T,i)\\\End_x(P)\cap B_{k,f}=\varnothing}}\frac{\init_\leq(g_P^*)}{\prod_{j\in\End_x(P)\cap A_{k,f}}x_j},
\end{align*}
for all $k\in[n]$. In particular, for $k=n$, we get
\[I_{n,f}=(x_{B_{n,f}})+(x_{N_T(A_{n,f})})^*+\sum_{\substack{u\sim v\sim a\in A_{n,f}\\u,v>a}}(y_uy_v)+\sum_{\substack{i\in B_{n,f}\\P\in\paths(T,i)\\\End_x(P)\cap B_{n,f}=\varnothing}}\frac{\init_\leq(g_P^*)}{\prod_{j\in\End_x(P)\cap A_{n,f}}x_j}.\]

Let $P$ be a path in the second sum for which $\init_\leq(g^*_P)$ is not divisible by $y_uy_v$ for some $uv\in\edges_f(T)$. In the following, we show that $P$ satisfies the conditions (1)--(4) of the definition of $\paths'_f(T)$. From the unimodality of paths in $T$ it follows that $V(P)\setminus\End_2(P)\subseteq B_{n,f}$. We can further assume that $\min(P)\in B_{n,f}$ and $\min(P)\cap\End(P)=\varnothing$. If $A_{n,f}$ is not independent, then $I_{n,f}=(x_{N_T(A_{n,f})})^*=(1)$ and our assumption is superfluous. Suppose $A_{n,f}$ is an independent set in $T$ and $P$ is the path $v_1,\ldots,v_m$ with $v_1<v_m$ and $\min(P)=v_k\in A_{n,f}$. Notice that $v_1\in A_{n,f}$. Then $k\leq 3$ and $m-k\leq2$ otherwise $\init_\leq(g^*_P)$ is divisible by either $y_{v_{k-2}}y_{v_{k-1}}$ when $k>3$ or $y_{v_{k+1}}y_{v_{k+2}}$ when $m-k>2$ so that $P\notin\paths'_f(T)$. If $k=1$, then since $\init_\leq(g^*_P)$ is not divisible by $y_{v_2}y_{v_3}$ we must have $m=2$. Then $v_2\in A_{n,f}$ for $\End_x(P)\cap B_{n,f}=\varnothing$, which contradicts the fact that $A_{n,f}$ is independent. As $v_1\in A_{n,f}$ and $A_{n,f}$ is independent, it follows that $k=3$. If $m=5$, then $\init_\leq(g^*_P)$ is divisible by $y_{v_4}y_{v_5}$, a contradiction. Thus $m=4$ and $\init_\leq(g^*_P)=x_{v_1}y_{v_2}y_{v_3}x_{v_4}$ with $v_4\in A_{n,f}$ contradicting the fact that $A_{n,f}$ is independent. It turns out that 
\[I_{n,f}=(x_{B_{n,f}})+(x_{N_T(A_{n,f})})^*+\sum_{uv\in\edges_f(T)}(y_uy_v)+\sum_{P\in\paths'_f(T)}\frac{\init_\leq(g_P^*)}{\prod_{j\in\End_x(P)\cap A_{n,f}}x_j}.\]
Recall that 
\[\paths_f(T):=\{P-\End_x(P)\colon P\in\paths'_f(T)\}\]
and
\[\paths^*_f(T):=\edges_f(T)\cup\paths_f(T).\]
Next, we observe that
\begin{equation}\label{I_(n,f)^(k,g)}
\begin{split}
(I_{n,f})^{k,g}=&I_{n,f}\odot_{g(n)}y_n\odot\cdots\odot_{g(k)}y_k\\
=&(x_{B_{n,f}})+(x_{N_T(A_{n,f})})^*+(y_{B'_{n+1-k,g}})+\sum_{P\in\paths^*_f(T)}(y_{V(P)\setminus A'_{n+1-k,g}})^\vee
\end{split}
\end{equation}
for all $f,g\in\{0,1\}^{[n]}$ and $k\in[n]$. Recall that $(y_{V(P)\setminus A'_{n+1-k,g}})^\vee$ is the ideal of $S$ generated by products of all $y_v$ for $v\in V(P)\setminus A'_{n+1-k,g}$. Therefore, 
\[(I_{n,f})^{n,g}=(x_{B_{n,f}})+(x_{N_T(A_{n,f})})^*+(y_{B'_{n,g}})+\sum_{P\in\paths^*_f(T)}(y_{V(P)\setminus A'_{n,g}})^\vee,\]
as required.
\end{proof}

One observes from the statement of Lemma \ref{I_{k,f} and (I_{n,f})^{k,g}} that 
\begin{itemize}
\item[(1)]$(x_{N_T(A_{n,f})})^*\neq(1)$ if and only if $A_{n,f}$ is an independent set in $T$,
\item[(2)]$(y_{V(P)\setminus A'_{n+1-k,g}})^\vee=(1)$ for a path $P$ of $\paths^*_f(T)$ if and only if $V(P)\subseteq A'_{n,g}$.
\end{itemize}
It follows that $(I_{n,f})^{n,g}\neq(1)$ if and only if $(x_{N_T(A_{n,f})})^*\neq(1)$ and $(y_{V(P)\setminus A'_{n+1-k,g}})^\vee\neq(1)$ for all $P\in\paths^*_f(T)$. In this case, $N_T(A_{n,f})\subseteq B_{n,f}$, $\varnothing\neq V(P)\setminus A'_{n,g}\subseteq B'_{n,g}$ for all $P\in\paths^*_f(T)$, and consequently
\begin{align*}
(I_{n,f})^{n,g}&=(x_{B_{n,f}})+(x_{N_T(A_{n,f})})^*+(y_{B'_{n,g}})+\sum_{P\in\paths^*_f(T)}(y_{V(P)\setminus A'_{n,g}})^\vee\\
&=(x_{B_{n,f}})+(y_{B'_{n,g}}).
\end{align*}

Let $T$ be a tree on $[n]$ with a given ascending labeling $\ell$. In view of the above equalities, we are interested in those $f,g$ for which $(I_{n,f})^{n,g}\neq(1)$ in order to compute the Hilbert series of $S/L_T^\KK(2)$. The above arguments show that $\Delta(T,\ell)$ is actually the set of all ordered pairs $(A,A')$ of subsets of $[n]$ such that $A$ is independent in $T$ and $V(P)\nsubseteq A'$ for all $P\in\paths^*_A(T)$, where $\paths^*_A(T):=\paths^*_f(T)$ for $f\in\{0,1\}^{[n]}$ with $f^{-1}(0)=A$. 

According to \eqref{Hilb(R/I,t)}, \cite[Proposition 2.2.5]{jh-th}, and \cite[Theorem 5.1.4]{wb-jh}, we have the following proposition:
\begin{proposition}\label{Hilbert series}
Let $T$ be a tree on $[n]$ with an ascending labeling $\ell$. Then 
\[\Hilb\left(\frac{S}{L_T^\KK(2)}, t\right)=\sum_{(A,A')\in\Delta(T,\ell)}\left(\frac{t}{1-t}\right)^{|A|+|A'|}.\]
In particular,
\[\dim\left(\frac{S}{L_T^\KK(2)}\right)=\dim\Delta(T,\ell)+1=\max\left\{|A|+|A'|\colon\ (A,A')\in\Delta(T,\ell)\right\}.\]
\end{proposition}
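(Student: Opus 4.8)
The statement to be proved is Proposition~\ref{Hilbert series}, which asserts the Hilbert series formula and the Krull dimension formula for $S/L_T^\KK(2)$. The plan is to assemble three ingredients that have all been prepared in the excerpt. First, by Corollary~\ref{Groebner basis of LSS_T(2): T has descending labeling}, the set $\mathcal{G}=\{g_P\}\cup\{g'_{P,\varnothing}\}$ is a Gr\"obner basis of $I:=L_T^\KK(2)$ with respect to $\leq=\leq_\ell$, so $\init_\leq(I)$ is the monomial ideal generated by the $\init_\leq(g_P^*)$; in particular it is square-free, hence equals the Stanley--Reisner ideal $I_{\Delta(T,\ell)}$ of the simplicial complex $\Delta(T,\ell)$ on $[2n]$. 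Second, by \cite[Proposition 2.2.5]{jh-th} the quotients $S/I$ and $S/\init_\leq(I)$ have the same Hilbert series, so it suffices to compute $\Hilb(S/\init_\leq(I),t)$. Third, by \eqref{Hilb(R/I,t)} (equivalently \cite[Theorem 5.1.7]{wb-jh} or \cite[Theorem 5.1.4]{wb-jh}), for any simplicial complex $\Delta$ one has
\[\Hilb\!\left(\frac{\KK[\text{vertices}]}{I_\Delta},t\right)=\sum_{F\in\Delta}\left(\frac{t}{1-t}\right)^{|F|},\]
the sum being taken over all faces $F$ of $\Delta$ (grouping the faces by dimension recovers the $f$-vector form). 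Applying this with $\Delta=\Delta(T,\ell)$ on the $2n$ variables $x_1,\dots,x_n,y_1,\dots,y_n$, and writing each face as an ordered pair $(A,A')$ with $A$ indexing the chosen $x$-variables and $A'$ the chosen $y$-variables, gives exactly
\[\Hilb\!\left(\frac{S}{L_T^\KK(2)},t\right)=\sum_{(A,A')\in\Delta(T,\ell)}\left(\frac{t}{1-t}\right)^{|A|+|A'|},\]
which is the first assertion. The identification of $\Delta(T,\ell)$ as the set of pairs $(A,A')$ with $A$ independent in $T$ and $V(P)\nsubseteq A'$ for every $P\in\paths^*_A(T)$ is precisely the combinatorial description derived from Lemma~\ref{I_{k,f} and (I_{n,f})^{k,g}} in the paragraphs just before the proposition, so no new work is needed there; I would simply cite it.

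For the dimension statement, I would recall that for a standard graded ring the Krull dimension equals the order of the pole of the Hilbert series at $t=1$, and that for a Stanley--Reisner ring $\KK[\Delta]/I_\Delta$ this order is $\dim\Delta+1$ (the largest $|F|$ over faces $F\in\Delta$); this is standard (e.g. \cite[Theorem 5.1.4]{wb-jh}). Reading off the exponent $|A|+|A'|$ from the Hilbert series formula, the largest power of $t/(1-t)$ occurring is $\max\{|A|+|A'|\colon (A,A')\in\Delta(T,\ell)\}$, which is $\dim\Delta(T,\ell)+1$. Since $\dim(S/I)=\dim(S/\init_\leq(I))=\dim\KK[\Delta(T,\ell)]/I_{\Delta(T,\ell)}$ (equality of Hilbert series forces equality of dimensions), this yields
\[\dim\!\left(\frac{S}{L_T^\KK(2)}\right)=\dim\Delta(T,\ell)+1=\max\{|A|+|A'|\colon (A,A')\in\Delta(T,\ell)\},\]
completing the proof.

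There is essentially no real obstacle left at this stage: all the substance has been carried out earlier. The Gr\"obner basis computation (Theorem~\ref{Groebner basis of LSS_T(2)} and Corollary~\ref{Groebner basis of LSS_T(2): T has descending labeling}) supplies the square-free initial ideal; Lemma~\ref{I_{k,f} and (I_{n,f})^{k,g}} together with the post-lemma discussion supplies the explicit facet/face description of $\Delta(T,\ell)$; and the Hilbert-series-and-dimension machinery for Stanley--Reisner rings is entirely off the shelf. The only point requiring a modicum of care is bookkeeping: making sure that a face of $\Delta(T,\ell)$, which lives on the $2n$-element vertex set, is correctly presented as an ordered pair $(A,A')$ of subsets of $[n]$ — i.e. that the two blocks of variables $x_i$ and $y_i$ are tracked separately — so that the exponent in each summand is the total cardinality $|A|+|A'|$ rather than, say, $\max(|A|,|A'|)$ or something else. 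Once that identification is fixed, the proposition is an immediate consequence of \eqref{Hilb(R/I,t)}, \cite[Proposition 2.2.5]{jh-th}, and \cite[Theorem 5.1.4]{wb-jh}, exactly as the sentence preceding the proposition already announces.
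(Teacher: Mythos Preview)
Your proposal is correct and follows essentially the same route as the paper: the paper itself presents Proposition~\ref{Hilbert series} as an immediate consequence of \eqref{Hilb(R/I,t)}, \cite[Proposition 2.2.5]{jh-th}, and \cite[Theorem 5.1.4]{wb-jh}, with the combinatorial description of $\Delta(T,\ell)$ coming from Lemma~\ref{I_{k,f} and (I_{n,f})^{k,g}} and the discussion after it. Your write-up simply spells out these ingredients in more detail.
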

\begin{corollary}\label{Lower and upper bounds for dim(S/I)}
If $T$ is a tree on $[n]$, then
\[n+1\leq\dim\left(\frac{S}{L_T^\KK(2)}\right)\leq p(T)+n-1,\]
where $p(T)$ denotes the number of pendants of $T$.
\end{corollary}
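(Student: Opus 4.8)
The plan is to use Proposition~\ref{Hilbert series}, which reduces the problem to finding the maximum of $|A|+|A'|$ over all pairs $(A,A')\in\Delta(T,\ell)$, where $\ell$ is any ascending labeling of $T$ (chosen freely, since the dimension does not depend on it). Recall that $(A,A')\in\Delta(T,\ell)$ precisely when $A$ is independent in $T$ and $V(P)\nsubseteq A'$ for every $P\in\paths^*_A(T)$.

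For the lower bound, I would exhibit an explicit pair $(A,A')$ with $|A|+|A'|\geq n+1$. The natural candidate is $A=\varnothing$ and $A'=[n]\setminus\{r\}$ for a suitably chosen vertex $r$ (say, a pendant which received the maximal label, so that $r$ is never a \emph{middle} vertex of any path). With $A=\varnothing$, the set $\edges_f(T)$ is empty, and $\paths'_f(T)$ consists of paths whose non-extreme vertices lie in $B_{n,f}=[n]$ and whose $x$-labeled endpoints lie in $A_{n,f}=\varnothing$; one checks from the structure that the only surviving members $P$ of $\paths^*_\varnothing(T)$ are single edges $uv$ (these come from the odd-length-$1$ paths, whose initial term $x_ux_v$ contributes no $y$'s, versus the length-$2$ even paths). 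Actually the cleaner route: with $A=\varnothing$, $\paths^*_\varnothing(T)$ is a family of paths each of which has at least two vertices, so requiring $V(P)\nsubseteq A'$ for all such $P$ is the condition that $A'$ misses at least one vertex of each such path; taking $A'=[n]\setminus\{r\}$ where $r$ lies on every member of the family (e.g. $r$ is chosen after verifying every such $P$ is a single edge incident to a fixed vertex, or more robustly: verify directly that $(\varnothing,[n]\setminus\{r\})$ lies in $\Delta(T,\ell)$ for a pendant $r$ of maximal label). This gives $|A|+|A'|=n-1$, which is not yet enough — so instead I would take $A$ to be a single pendant vertex $v$ of maximal label and $A'$ as large as possible. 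The safest concrete choice is to root $T$, pick a leaf of maximal label, and argue combinatorially; since the paper only asserts $n+1\le\dim$, there should be a two-element set $A$ (e.g. two leaves) together with $A'$ of size $n-1$ avoiding one vertex per path in $\paths^*_A(T)$.

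For the upper bound, I would bound $|A|+|A'|$ for an arbitrary $(A,A')\in\Delta(T,\ell)$. The key point is that $A'$ cannot contain the full vertex set of any path in $\paths^*_A(T)$; in particular, every edge $uv\in\edges_A(T)$ forces $A'$ to omit $u$ or $v$. More usefully, for \emph{each} vertex $a\in A$ that is not a pendant, consider the paths through $a$ as a middle vertex: the length-$2$ paths $u,a,v$ (with $u,v>a$, which happens automatically since $a$ is small in an ascending labeling if we pick $\ell$ well) give edges in $\edges_A(T)$, so $A'$ must omit a vertex from $N_T(a)$ for such configurations. Summing these constraints, the number of vertices $A'$ is forced to omit is at least (number of non-pendant vertices of $T$) minus a correction — and combining $|A|\le\alpha(T)$ (independence number) with $|A'|\le n-(\text{number of forced omissions})$ should yield $|A|+|A'|\le n-1+p(T)$. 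The cleanest formulation: show that $|A|+|A'|\le n-1+p(T)$ by arguing that if $A$ is independent and $A'$ avoids one vertex from each path of $\paths^*_A(T)$, then (number of "bad" vertices not in $A'$) $\ge |A| - p_A + 1$ where $p_A$ counts pendants involved, hence $|A'|\le n-|A|+p(T)-1$.

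The main obstacle will be the upper bound, specifically pinning down exactly how the combinatorics of $\paths^*_A(T)$ forces omissions from $A'$ in terms of the non-pendant vertices of $A$ and the edges $\edges_A(T)$. One has to be careful that different paths in $\paths^*_A(T)$ may share vertices, so the forced omissions are not simply additive; the right tool is probably a matching or covering argument on an auxiliary graph whose edges are the (edge-type) members of $\paths^*_A(T)$, using that these edges all lie in $N_T(A)\times N_T(A)$ with a tree structure to guarantee enough disjointness. Once that combinatorial lemma is in place, the inequality $\dim(S/L_T^\KK(2))\le n-1+p(T)$ follows by Proposition~\ref{Hilbert series}, and the lower bound is a matter of exhibiting one good pair, which is comparatively routine.
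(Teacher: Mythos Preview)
Your overall framework is right: both bounds come from Proposition~\ref{Hilbert series}, and the freedom to choose the ascending labeling $\ell$ is the crucial degree of freedom. But both halves of your sketch have real gaps.

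For the lower bound you never land on a working pair. The paper's choice is simply $A=\{n\}$ and $A'=[n]$. With $A=\{n\}$ (a pendant of maximal label), the condition $u,v>a$ in the definition of $\edges_f(T)$ is vacuous, and the condition $\End_x(P)\subseteq A=\{n\}$ forces any candidate $P\in\paths'_f(T)$ to have $n$ as its $x$-labeled endpoint, which contradicts $v_1<v_{2m+1}$ (or $v_1<v_{2m}$) since $n$ is the largest label. Hence $\paths^*_A(T)=\varnothing$, the constraint on $A'$ is empty, and $(\{n\},[n])\in\Delta(T,\ell)$ gives $|A|+|A'|=n+1$ immediately. Your attempts with $A=\varnothing$ or with two leaves are unnecessary detours; the point is that a well-chosen singleton $A$ makes $\paths^*_A(T)$ vanish entirely.

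For the upper bound your diagnosis of the difficulty (overlapping paths in $\paths^*_A(T)$ prevent naive additivity of the forced omissions from $A'$) is correct, but your proposed fix via a matching/covering argument on an auxiliary graph is not what makes the argument go through. The paper instead \emph{exploits the choice of labeling}: root $T$ at a pendant $v$ and label so that $\ell$ increases with distance from $v$. With this labeling, every $a\in A$ that is neither a leaf of $T$ nor a leaf of $T\setminus(P(T)\setminus\{v\})$ has a child $s$ and a grandchild $t$, and the pair $\{s,t\}$ lies in $\edges_f(T)$; the rooted structure lets one associate to each such $a$ a forced omission from $A'$ in a way that yields
\[
|A'|\le n-\bigl|A\setminus\bigl((P(T)\setminus\{v\})\cup(P(T\setminus(P(T)\setminus\{v\}))\setminus\{v\})\bigr)\bigr|.
\]
The final step uses that $A$ is independent, so it cannot contain both a leaf of $T\setminus(P(T)\setminus\{v\})$ and any of its children in $P(T)\setminus\{v\}$, bounding the exceptional set by $p(T)-1$. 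Without fixing this particular rooted labeling you will not get the clean injection from ``bad'' elements of $A$ to omissions in $A'$; this is the missing idea in your plan.
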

\begin{proof}
Let $P(T')$ stand for the set of all pendants of a tree $T'$ and put $p(T'):=|P(T')|$. Suppose without loss of generality that $n\geq3$. If $A=\{n\}$ and $A'=[n]$, then $(A,A')\in\Delta(T,\ell)$ for any ascending labeling $\ell$ of $T$. Thus $\dim(S/L_T^\KK(2))\geq n+1$ by Proposition \ref{Hilbert series}. To prove the upper bound, let $A$ be an independent set in $T$, and let $v\in P(T)$. We may view $T$ as a rooted tree with root $v$ and take a labeling $\ell\colon V(T)\rightarrow[n]$ of $T$ with $\ell(u)<\ell(u')$ whenever $d(u,v)<d(u',v)$. Clearly, $\ell$ is an ascending labeling of $T$. Let $(A,A')\in\Delta(T,\ell)$. Let $a\neq v$ be an element of $A$ that is neither a pendant of $T$ nor a pendant of $T\setminus (P(T)\setminus\{v\})$ that is
$a\in A\setminus(P(T)\setminus\{v\})\cup (P(T\setminus (P(T)\setminus\{v\}))\setminus\{v\})$. Let $f\in\{0,1\}^{[n]}$ be such that $f^{-1}(0)=A$. From the definition of $\Delta(T,\ell)$, it follows that either $s\notin A'$ or $t\notin A'$ for any children $s$ of $a$ and children $t$ of $s$, otherwise the path $s,t$ of $\edges_f(T)\subseteq\paths^*_f(T)$ is contained in $T[A']$, contradicting the fact that $(A,A')\in\Delta(T,\ell)$. Thus 
\[|A'|\leq n - |A\setminus((P(T)\setminus\{v\})\cup (P(T\setminus (P(T)\setminus\{v\}))\setminus\{v\}))|,\]
which implies that
\[|A|+|A'|\leq n + |A\cap((P(T)\setminus\{v\})\cup (P(T\setminus (P(T)\setminus\{v\}))\setminus\{v\}))|.\]
Since $A$ does not contain an element $u$ of $P(T\setminus (P(T)\setminus\{v\}))\setminus\{v\}$ and any of its children $u'$ in $P(T)\setminus\{v\}$ simultaneously, it follows that 
\[|A\cap((P(T)\setminus\{v\})\cup (P(T\setminus (P(T)\setminus\{v\}))\setminus\{v\}))|\leq|A\cap(P(T)\setminus\{v\})|\leq|P(T)\setminus\{v\}|=p(T)-1.\]
Therefore, $|A|+|A'|\leq p(T)+n-1$. 
\end{proof}
\begin{remark} \label{regular sequence}
Let $R$ be a standard graded $\KK$-algebra and $I$ be a graded ideal of $R$. If $u_1, \ldots, u_k$ are homogeneous elements of $R$ such that the sequence $u_1+I, \ldots, u_k+I$ forms an ${R}/{I}$-regular sequence, then the Hilbert series of $\frac{{R}}{(I,u_1,\ldots,u_k) } $ is of the form
\begin{equation*}
H_{\frac{{R}}{(I,u_1,\ldots,u_k)}}(t)=\prod_{i=1}^k \left(1-t^{\deg(u_i)}\right) \cdot H_{\frac{{R}}{I}}(t).
\end{equation*}
\end{remark}
\begin{remark}
Let $T$ be a tree on $[n]$ with an ascending labeling $\ell$. Let $d:=\dim\Delta(T,\ell)$ and
\[\Delta_i(T,\ell):=\{(A,A')\in\Delta(T,\ell)\colon |A|+|A'|=i\},\]
for all $0\leq i\leq d$. Then
\[\Hilb\left(\frac{S}{L_T^\KK(2)}, t\right)=\frac{1}{(1-t)^d}\sum_{i=0}^d|\Delta_i(T,\ell)|t^i(1-t)^{d-i}.\]
In particular, $|\Delta_i(T,\ell)|$ is independent of the choice of the labeling $\ell$, for all $i=0,\ldots,d$.
\end{remark}
\begin{example}
In this example, we compute $\Delta_i(T,\ell)$ when $T$ is a path or a star graph.
\begin{itemize}
\item[(1)]If $T=P_n$ is a path with $n$ vertices, then $S/L_T^\KK(2)$ is a complete intersection by \cite[Theorem 1.5(2)]{ac-vw}. As $\dim(S/L_T^\KK(2))=n+1$ (see Corollary \ref{Lower and upper bounds for dim(S/I)}), it follows that $L_T^\KK(2)$ is generated by a homogeneous regular sequence $f_1,\ldots,f_{n-1}$ of degree $2$. Thus in view of Remark~\ref{regular sequence}, we have
\begin{align*}
\Hilb\left(\frac{S}{L_T^\KK(2)}, t\right)&=(1-t^2)^{n-1}\Hilb\left(S, t\right)=\frac{(1+t)^{n-1}}{(1-t)^{n+1}}\\
&=\frac{1}{(1-t)^{n+1}}\sum_{i=0}^{n-1}\binom{n-1}{i}t^i.
\end{align*}
On the other hand, 
\[\Hilb\left(\frac{S}{L_T^\KK(2)}, t\right)=\frac{1}{(1-t)^{n+1}}\sum_{i\geq0}|\Delta_i(T,\ell)|t^i(1-t)^{n+1-i},\]
for a given ascending labeling $\ell$ of $T$ from which it follows that
\[|\Delta_i(T,\ell)|=\sum_{k=0}^i\binom{n+1-k}{i-k}\binom{n-1}{k}.\]
for all $i$ with $0\leq i\leq n+1$. 
\item[(2)]Suppose $T=S_n$ is the star graph with $n$ vertices and let $\ell$ be an ascending labeling of $T$ that assigns $1$ to the central vertex. Then $(A,A')\in\Delta_i(T,\ell)$ if and only if one of the following cases holds:
\begin{itemize}
\item[(a)]$A=\varnothing$ and $A'\subseteq[n]$ with $|A'|=i$,
\item[(b)]$A=\{1\}$ and $A'\subseteq[n]$ with $|A|=i-1$,
\item[(c)]$1\notin A\neq\varnothing$ and either
\begin{itemize}
\item[(c1)]$A'\subseteq[n]\setminus\{1\}$ with $|A'|=i-|A|$, or
\item[(c2)]$1\in A'$ and $A'\subseteq[\min(A)]$ with $|A'|=i-|A|$,
\end{itemize}
\end{itemize}
The number of $(A,A')$ in cases (a) and (b) are simply $\binom{n}{i}$ and $\binom{n}{i-1}$. Also, this number for case (c.c1) is equal to $\sum_{k=1}^i\binom{n-1}{k}\binom{n-1}{i-k}$, where $\binom{n-1}{k}$ and $\binom{n-1}{i-k}$ are the number of subsets $A$ and $A'$ of $[n]\setminus\{1\}$ of size $k$ and $i-k$, respectively. To count the number of $(A,A')$ in case (c.c2), notice that either $A\cap A'=\varnothing$ and $A\cup A'=\{1=t_1<\cdots<t_i\}$ is a subset of $[n]$ including $1$ where $A'=\{t_1,\ldots,t_k\}$ and $A=\{t_{k+1},\ldots,t_i\}$ for $1\leq k<i$, or $|A\cap A'|=1$ and $A\cup A'=\{1=t_1<\cdots<t_{i-1}\}$ is a subset of $[n]$ including $1$ where $A'=\{t_1,\ldots,t_k\}$ and $A=\{t_k,\ldots,t_{i-1}\}$ for $1<k<i$. In the former case, we have $(i-1)\binom{n-1}{i-1}$ pairs $(A,A')$, and in the later case, we have $(i-2)\binom{n-2}{i-2}$ pairs $(A,A')$. Thus
\begin{align*}
|\Delta_i(T,\ell)|=&\binom{n}{i}+\binom{n}{i-1}+\sum_{k=1}^i\binom{n-1}{k}\binom{n-1}{i-k}\\
&+\left[(i-1)\binom{n-1}{i-1}+(i-2)\binom{n-1}{i-2}\right]\\
=&\binom{n}{i}+\binom{n}{i-1}+\binom{2n-2}{i}-\binom{n-1}{i}+(n-1)\binom{n-1}{i-2}\\
=&\binom{2n-2}{i}+(n-1)\binom{n-1}{i-2}+\binom{n}{i-1}+\binom{n-1}{i-1}
\end{align*}
as
\[\sum_{k=0}^i\binom{m}{k}\binom{m}{i-k}=\binom{2m}{i}\]
for all $0\leq i\leq m$.
\end{itemize}
\end{example}


In \cite{jh-am-ssm-vw}, Herzog, Macchia, Saeedi Madani, and Welker present a primary decomposition for $L_G^\KK(2)$ when $\sqrt{-1}\notin\KK$ and apply it to give a formula for the dimension of $S/L_G^\KK(2)$ in this case. Indeed, they show that if $\Gamma$ is a graph on $[n]$, $I=L_\Gamma^\KK(2)$, and $\sqrt{-1}\notin\KK$, then
\[\dim\left(\frac{S}{I}\right)=\max\left\{|V|+b(\Gamma[V])\colon\ V\subseteq[n]\right\},\]
where $b(\Gamma)$ denotes the number of bipartite connected components of a graph $\Gamma$. It follows that, if $T$ is a tree, $I=L_T^\KK(2)$, and $\sqrt{-1}\notin\KK$, then
\begin{equation}\label{dim(S/I)}
\dim\left(\frac{S}{I}\right)=\max\left\{|V|+c(T[V])\colon\ V\subseteq[n]\right\},
\end{equation}
where $c(\Gamma)$ denotes the number of connected components of a graph $\Gamma$. According to Proposition \ref{Hilbert series}, the Hilbert series of $S/I$ is independent of the underlying field $\KK$. Thus the formula \eqref{dim(S/I)} for the dimension of $S/I$ is valid for any field $\KK$. In Corollary \ref{dim(S/I): Explicit formula} below, we give an explicit formula for the dimension of the ring $S/I$. 

We say that a path $P$ in a tree $T$ is \textit{pendant} if it contains a pendant vertex and that all of its vertices have at most two neighbors in $T$. For a tree $T$, we put $\P_0^*(T)=\P_0(T)=\Q_0(T)=\varnothing$, and define $\P_i^*(T),\P_i(T),\Q_i(T)$ inductively as follows for all $i\geq1$: if $T_i:=T-\cup_{j=0}^{i-1}(\P_j(T)\cup \Q_j(T))$ and $\P_i^*(T)$ is the set of all maximal pendant paths of $T_i$, then $\P_i(T):=V(\cup\P_i^*(T))$ and $\Q_i(T):=N_{T_i}(\P_i(T))\setminus\P_i(T)$. Also, we put $\P^*(T):=\cup_i\P_i^*(T)$, $\P(T):=\cup_i\P_i(T)$, and $\Q(T):=\cup_i\Q_i(T)$.
\begin{example}
Let $T$ be the tree in Fig. \ref{P, Q}. Then $\P_1(T)$, $\P_2(T)$, $\Q_1(T)$, and $\Q_2(T)$ are the sets of all black square vertices, black circle vertices, white square vertices, and white circle vertices, respectively. Also, $\P^*(T)$ is the set of all black-vertex paths.
\end{example}
\begin{figure}[ht]
\begin{tikzpicture}
\node [draw, circle, inner sep = 2pt] (0) at (0, 0) {};
\node [draw, circle, inner sep = 2pt] (1) at (1, 0) {};

\draw (0)--(1);

\node [draw, circle, fill=black, inner sep = 2pt] (10) at ({1 + cos(-40)}, {sin(-40)}) {};
\node [draw, circle, fill=black, inner sep = 2pt] (11) at ({1 + cos(40)}, {sin(40)}) {};

\node [draw, rectangle, inner sep = 2pt] (100) at ({1 + cos(-40)+cos(-60)}, {sin(-40)+sin(-60)}) {};
\node [draw, rectangle, inner sep = 2pt] (101) at ({1 + cos(-40)+cos(0)}, {sin(-40)+sin(0)}) {};

\node [draw, rectangle, inner sep = 2pt] (110) at ({1 + cos(40)+cos(0)}, {sin(40)+sin(0)}) {};
\node [draw, rectangle, inner sep = 2pt] (111) at ({1 + cos(40)+cos(60)}, {sin(40)+sin(60)}) {};

\node [draw, rectangle, fill=black, inner sep = 2pt] (1000) at ({1 + cos(-40)+cos(-60)+cos(-90)}, {sin(-40)+sin(-60)+sin(-90)}) {};
\node [draw, rectangle, fill=black, inner sep = 2pt] (1001) at ({1 + cos(-40)+cos(-60)+cos(-30)}, {sin(-40)+sin(-60)+sin(-30)}) {};

\node [draw, rectangle, fill=black, inner sep = 2pt] (1010) at ({1 + cos(-40)+cos(0)+cos(-30)}, {sin(-40)+sin(0)+sin(-30)}) {};
\node [draw, rectangle, fill=black, inner sep = 2pt] (1011) at ({1 + cos(-40)+cos(0)+cos(30)}, {sin(-40)+sin(0)+sin(30)}) {};

\node [draw, rectangle, fill=black, inner sep = 2pt] (1100) at ({1 + cos(40)+cos(0)+cos(-30)}, {sin(40)+sin(0)+sin(-30)}) {};
\node [draw, rectangle, fill=black, inner sep = 2pt] (1101) at ({1 + cos(40)+cos(0)+cos(30)}, {sin(40)+sin(0)+sin(30)}) {};

\node [draw, rectangle, fill=black, inner sep = 2pt] (1110) at ({1 + cos(40)+cos(60)+cos(30)}, {sin(40)+sin(60)+sin(30)}) {};
\node [draw, rectangle, fill=black, inner sep = 2pt] (1111) at ({1 + cos(40)+cos(60)+cos(90)}, {sin(40)+sin(60)+sin(90)}) {};

\node [draw, rectangle, fill=black, inner sep = 2pt] (1101a) at ({1 + cos(40)+cos(0)+cos(30)+1}, {sin(40)+sin(0)+sin(30)}) {};
\node [draw, rectangle, fill=black, inner sep = 2pt] (1101b) at ({1 + cos(40)+cos(0)+cos(30)+2}, {sin(40)+sin(0)+sin(30)}) {};
\node [draw, rectangle, fill=black, inner sep = 2pt] (1101c) at ({1 + cos(40)+cos(0)+cos(30)+3}, {sin(40)+sin(0)+sin(30)}) {};

\draw (10)--(1)--(11);
\draw (100)--(10)--(101);
\draw (110)--(11)--(111);
\draw (1000)--(100)--(1001);
\draw (1010)--(101)--(1011);
\draw (1100)--(110)--(1101);
\draw (1110)--(111)--(1111);

\draw (1101)--(1101a) (1101b)--(1101c);
\draw [dashed] (1101a)--(1101b);

\node [draw, circle, fill=black, inner sep = 2pt] (00) at ({-cos(-40)}, {sin(-40)}) {};
\node [draw, circle, fill=black, inner sep = 2pt] (01) at ({-cos(40)}, {sin(40)}) {};

\node [draw, rectangle, inner sep = 2pt] (000) at ({-cos(-40)-cos(-60)}, {sin(-40)+sin(-60)}) {};
\node [draw, rectangle, inner sep = 2pt] (001) at ({-cos(-40)-cos(0)}, {sin(-40)+sin(0)}) {};

\node [draw, rectangle, inner sep = 2pt] (010) at ({-cos(40)-cos(0)}, {sin(40)+sin(0)}) {};
\node [draw, rectangle, inner sep = 2pt] (011) at ({-cos(40)-cos(60)}, {sin(40)+sin(60)}) {};

\node [draw, rectangle, fill=black, inner sep = 2pt] (0000) at ({-cos(-40)-cos(-60)-cos(-90)}, {sin(-40)+sin(-60)+sin(-90)}) {};
\node [draw, rectangle, fill=black, inner sep = 2pt] (0001) at ({-cos(-40)-cos(-60)-cos(-30)}, {sin(-40)+sin(-60)+sin(-30)}) {};

\node [draw, rectangle, fill=black, inner sep = 2pt] (0010) at ({-cos(-40)-cos(0)-cos(-30)}, {sin(-40)+sin(0)+sin(-30)}) {};
\node [draw, rectangle, fill=black, inner sep = 2pt] (0011) at ({-cos(-40)-cos(0)-cos(30)}, {sin(-40)+sin(0)+sin(30)}) {};

\node [draw, rectangle, fill=black, inner sep = 2pt] (0100) at ({-cos(40)-cos(0)-cos(-30)}, {sin(40)+sin(0)+sin(-30)}) {};
\node [draw, rectangle, fill=black, inner sep = 2pt] (0101) at ({-cos(40)-cos(0)-cos(30)}, {sin(40)+sin(0)+sin(30)}) {};

\node [draw, rectangle, fill=black, inner sep = 2pt] (0110) at ({-cos(40)-cos(60)-cos(30)}, {sin(40)+sin(60)+sin(30)}) {};
\node [draw, rectangle, fill=black, inner sep = 2pt] (0111) at ({-cos(40)-cos(60)-cos(90)}, {sin(40)+sin(60)+sin(90)}) {};

\node [draw, rectangle, fill=black, inner sep = 2pt] (0010a) at ({-cos(-40)-cos(0)-cos(-30)-1}, {sin(-40)+sin(0)+sin(-30)}) {};
\node [draw, rectangle, fill=black, inner sep = 2pt] (0010b) at ({-cos(-40)-cos(0)-cos(-30)-2}, {sin(-40)+sin(0)+sin(-30)}) {};
\node [draw, rectangle, fill=black, inner sep = 2pt] (0010c) at ({-cos(-40)-cos(0)-cos(-30)-3}, {sin(-40)+sin(0)+sin(-30)}) {};

\draw (00)--(0)--(01);
\draw (000)--(00)--(001);
\draw (010)--(01)--(011);
\draw (0000)--(000)--(0001);
\draw (0010)--(001)--(0011);
\draw (0100)--(010)--(0101);
\draw (0110)--(011)--(0111);

\draw (0010)--(0010a) (0010b)--(0010c);
\draw [dashed] (0010a)--(0010b);
\end{tikzpicture}
\caption{}\label{P, Q}
\end{figure}
\begin{proposition}\label{dim(S/I): Explicit formula}
If $T$ is a tree on $[n]$ and $I=L_T^\KK(2)$, then
\[\dim\left(\frac{S}{I}\right)=|\P(T)|+|\P^*(T)|.\]
\end{proposition}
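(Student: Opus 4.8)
My plan is to reduce the statement to the purely combinatorial identity
\[
\max\bigl\{|V|+c(T[V])\colon V\subseteq[n]\bigr\}=|\P(T)|+|\P^*(T)|,
\]
since by Proposition~\ref{Hilbert series} and the discussion preceding it the left-hand side equals $\dim(S/I)$ over an arbitrary field --- this is precisely formula \eqref{dim(S/I)}. I would then prove the two inequalities separately.

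For the lower bound I would take $V:=\P(T)$ and identify $c(T[\P(T)])$. The structural point is that the paths in $\P^*(T)$ are pairwise vertex-disjoint and no edge of $T$ joins two of them. Within one layer this follows from maximality: if two distinct maximal pendant paths of the forest $T_i$ were joined by an edge $vv'$, then, because every vertex of a pendant path has at most two neighbours in $T_i$, one of the two paths could be enlarged through that edge, a contradiction. Across layers, if $P\in\P_i^*(T)$ and $P'\in\P_j^*(T)$ with $i<j$, then $V(P')\subseteq V(T_{i+1})$, whereas every edge of $T$ leaving $P$ ends in $\P_i(T)\cup\Q_i(T)$, which is deleted before $T_{i+1}$ is formed. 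Hence the connected components of $T[\P(T)]$ are exactly the elements of $\P^*(T)$, so $|\P(T)|+c(T[\P(T)])=|\P(T)|+|\P^*(T)|$, which gives ``$\ge$''.

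For the upper bound I would induct on $n=|V(T)|$, peeling off one layer. Put $Z:=\P_1(T)\cup\Q_1(T)$ and $T':=T-Z$. Since the peeling of $T'$ is the continuation of that of $T$, one has $\P(T)=\P_1(T)\sqcup\P(T')$ and $\P^*(T)=\P_1^*(T)\sqcup\P^*(T')$, so $|\P(T)|+|\P^*(T)|=|\P_1(T)|+|\P_1^*(T)|+\bigl(|\P(T')|+|\P^*(T')|\bigr)$. Fix $V\subseteq[n]$. We may assume $T[V]$ is a disjoint union of paths, since a vertex of $T[V]$-degree $\ge 3$ can be deleted from $V$, strictly increasing $|V|+c(T[V])$. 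Splitting $V=(V\cap Z)\sqcup(V\cap V(T'))$ and comparing the component counts of $T[V]$ and of $T'[V\cap V(T')]=T[V\cap V(T')]$ (the number of components of $T[V]$ meeting $Z$ is at most $c(T[V\cap Z])$), one obtains
\[
|V|+c(T[V])\ \le\ \Bigl(|V\cap Z|+c(T[V\cap Z])\Bigr)+\Bigl(|V\cap V(T')|+c(T'[V\cap V(T')])\Bigr).
\]
The second bracket is at most $|\P(T')|+|\P^*(T')|$ by the inductive hypothesis, so ``$\le$'' reduces to the inequality $\max_{W\subseteq Z}\bigl(|W|+c(T[W])\bigr)\le|\P_1(T)|+|\P_1^*(T)|$.

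This last inequality I expect to be the main obstacle. Writing $\P_1^*(T)=\{P_1,\dots,P_t\}$, these are pairwise non-adjacent paths, each $P_j$ attached to the rest of $T$ through a single vertex $q_j\in\Q_1(T)$, and every vertex of $\Q_1(T)$ is an attachment point of some $P_j$, so $|\Q_1(T)|\le t$. Taking $W\subseteq Z$ with $T[W]$ again a union of paths, one has $|W|+c(T[W])=\sum_j|W\cap V(P_j)|+|W\cap\Q_1(T)|+(\text{\#components})$ with $\sum_j|W\cap V(P_j)|\le|\P_1(T)|$ since each $P_j$ is a path, so the inequality amounts to $|W\cap\Q_1(T)|+(\text{\#components of }T[W])\le t$. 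I would prove this by building an injection from the disjoint union of $W\cap\Q_1(T)$ with the set of components of $T[W]$ into $\{P_1,\dots,P_t\}$ --- charging each path-component $\pi$ of $T[W]$ to a pendant path it meets (or, if $\pi\subseteq\Q_1(T)$, to one attached at an endpoint of $\pi$) and each $q\in W\cap\Q_1(T)$ to a pendant path attached at $q$. The non-adjacency of the $P_j$ and the fact that the components of $T[W]$ are paths make such an assignment possible; the delicate part --- and the technical heart of the whole proof --- is arranging the charging so that it is injective even when one attachment vertex is shared by several components and when a single component runs through several attachment vertices.
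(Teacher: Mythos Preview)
Your reduction to the combinatorial identity and your lower-bound argument are fine; the connected components of $T[\P(T)]$ are indeed the paths in $\P^*(T)$ (your across-layers argument has a small slip --- an edge of $T$ leaving $P\in\P_i^*(T)$ can also end in $\Q_{i-1}(T)\cup\cdots\cup\Q_1(T)$ --- but this does not affect the conclusion, since vertices of $T_{i+1}$ are never adjacent in $T$ to $\P_i(T)$).

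The upper bound, however, has a genuine gap. After the splitting you reduce the layer inequality to
\[
|W\cap\Q_1(T)|+c(T[W])\ \le\ t\qquad\bigl(t=|\P_1^*(T)|\bigr),
\]
and propose to prove it by an injection. This inequality is \emph{false}. Take the caterpillar $T$ with spine $q_1q_2q_3q_4$ and a single pendant $p_i$ attached at each $q_i$. Then $\P_1^*(T)=\{p_1q_1,\ p_2,\ p_3,\ p_4q_4\}$, so $t=4$ and $\Q_1(T)=\{q_2,q_3\}$. For $W=\{p_1,p_4,q_2,q_3\}$ one has $T[W]$ equal to the disjoint union of $\{p_1\}$, $\{q_2,q_3\}$, $\{p_4\}$, hence $c(T[W])=3$ and $|W\cap\Q_1(T)|+c(T[W])=2+3=5>4$. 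No injection into $\{P_1,\dots,P_t\}$ can exist. The step where you lose too much is the crude bound $\sum_j|W\cap V(P_j)|\le|\P_1(T)|$: when $W$ uses few vertices of $\P_1(T)$, the slack there is essential to compensate for the components created inside $\Q_1(T)$, and your decoupling throws that slack away. The layer inequality $\max_{W\subseteq Z}(|W|+c(T[W]))\le|\P_1(T)|+t$ itself is true (in the caterpillar, $|W|+c(T[W])=7\le10$), but proving it correctly seems to require an argument of the same nature as the full proposition, so the induction does not obviously simplify matters.

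For comparison, the paper avoids induction entirely: it writes an arbitrary $V$ as $(\P(T)\setminus P)\cup Q$ with $P\subseteq\P(T)$ and $Q\subseteq\Q(T)$, derives closed formulas for $d((\P(T)\setminus P)\cup Q)-d(\P(T))$ by adding the vertices of $Q$ and removing those of $P$ one at a time, and then bounds this difference above by $0$ using the key structural fact that every $q\in\Q(T)$ has at least three neighbours in $\P(T)$. That degree-$\ge 3$ property is exactly what absorbs the contribution of the $\Q$-vertices, and it is what your injection argument is missing.
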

\begin{proof}
Let $d(V):=|V|+c(T[V])$ for any subset $V$ of $[n]$. We show that 
\begin{equation}\label{d(P(T) cup Q)}
d(\P(T)\cup Q)=d(\P(T))+2|Q|-\sum_{q\in Q}\deg_{T[\P]}(q)-|E(T[Q])|
\end{equation}
for every subset $Q$ of $\Q(T)$. Clearly, the equality \eqref{d(P(T) cup Q)} holds for $Q=\varnothing$. Suppose we have proved the equality \eqref{d(P(T) cup Q)} for some subset $Q$ of $\Q(T)$ and $q\in \Q(T)\setminus Q$. Assume $q\in\Q_i(T)$. Then
\begin{align*}
d(\P(T)\cup Q\cup\{q\})&=d(\P(T)\cup Q)+1-(\deg_{T[\P(T)]}(q)-1)\\
&=d(\P(T))+2|Q\cup\{q\}|-\sum_{q'\in Q\cup\{q\}}\deg_{T[\P]}(q')-|E(T[Q\cup\{q\}])|
\end{align*}
for all vertices in $\P(T)\cup Q$ which are adjacent to $q$ in $T$ belong to distinct connected components of $T[\P(T)\cup Q]$ and that these components turn into a single connected component when $q$ is added to $\P(T)\cup Q$. It follows that the equality \eqref{d(P(T) cup Q)} holds for $Q\cup\{q\}$ as well. Next, we show that
\begin{equation}\label{d((P(T) - P) cup Q)}
d((\P(T)\setminus P)\cup Q)=d(\P(T)\cup Q) - 2|P| + |N_{T[\P(T)]}(P)|+\sum_{p\in P}\deg_{T[Q]}(p)
\end{equation}
for all subsets $P$ of $\P(T)$ and $Q$ of $\Q(T)$. Let $Q$ be a fixed subset of $\Q(T)$. Clearly, the equality \eqref{d((P(T) - P) cup Q)} holds for $P=\varnothing$. Assume the equality \eqref{d((P(T) - P) cup Q)} is true for a subset $P$ of $\P(T)$ and that $p\in\P(T)\setminus P$. We further assume that $p\in\calP$ for some $\calP\in\P_i^*(T)$. We have the following cases to consider:
\begin{itemize}
\item[(i)]$p\not\in\End(\calP)$. Then either $N_\calP(p)\cap P=\varnothing$ for which \begin{align*}
d((\P(T)\setminus (P\cup\{p\}))\cup Q)&=d((\P(T)\setminus (P\cup\{p\}))\cup Q)\\
&=d((\P(T)\setminus (P\cup\{p\}))\cup Q)-2+\deg_{T[\P(T)\cup Q]}(p)
\end{align*}
or $N_\calP(p)\cap P\neq\varnothing$ so that
\begin{align*}
d((\P(T)\setminus (P\cup\{p\}))\cup Q)&=d((\P(T)\setminus (P\cup\{p\}))\cup Q)-1\\
&=d((\P(T)\setminus (P\cup\{p\}))\cup Q)-2+\deg_{T[\P(T)\cup Q]}(p).
\end{align*}
\item[(ii)]$p$ is adjacent to some vertices of $\Q_{i-1}(T)$, say $q_1,\ldots,q_k$. As $q_1,\ldots,q_k$ belong to distinct connected components of $T[(\P(T)\setminus (P\cup\{p\}))\cup Q]$, it follows that
\begin{align*}
d((\P(T)\setminus (P\cup\{p\}))\cup Q)&=d((\P(T)\setminus (P\cup\{p\}))\cup Q)-1+k-\deg_{T[P\cup\{p\}]}(p)\\
&=d((\P(T)\setminus (P\cup\{p\}))\cup Q)-2+\deg_{T[\P(T)\cup Q]}(p).
\end{align*}
\item[(iii)]$p$ is adjacent to a vertex of $\Q_i(T)$ that is unique by our construction of $\P_i(T)$. Then
\begin{align*}
d((\P(T)\setminus (P\cup\{p\}))\cup Q)&=d((\P(T)\setminus (P\cup\{p\}))\cup Q)-1+\deg_{T[P\cup\{p\}]}(p)\\
&=d((\P(T)\setminus (P\cup\{p\}))\cup Q)-2+\deg_{T[\P(T)\cup Q]}(p).
\end{align*}
\end{itemize}
The above arguments yield
\begin{align*}
d((\P(T)\setminus (P\cup\{p\}))\cup Q)=&d((\P(T)\setminus (P\cup\{p\}))\cup Q)-2+\deg_{T[\P(T)\cup Q]}(p)\\
=&d(\P(T)\cup Q) - 2|P\cup\{p\}| + |N_{T[\P(T)]}(P\cup\{p\})|\\
&+\sum_{p'\in P\cup\{p\}}\deg_{T[Q]}(p')
\end{align*}
that is the equality \eqref{d((P(T) - P) cup Q)} holds for $P\cup\{p\}$ as well.

Now, from the equalities \eqref{d(P(T) cup Q)} and \eqref{d((P(T) - P) cup Q)} for arbitrary subsets $P$ of $\P(T)$ and $Q$ of $\Q(T)$, we get
\begin{align*}
d((\P(T)\setminus P)\cup Q)=&d(\P(T)\cup Q)-2|P|+|N_{T[\P(T)]}(P)|+\sum_{p\in P}\deg_{T[Q\cup\{p\}]}(p)\\
=&d(\P(T))-2|P|+|N_{T[\P(T)]}(P)|+2|Q|+\sum_{p\in P}\deg_{T[Q\cup\{p\}]}(p)\\
&-\sum_{q\in Q}\deg_{T[\P(T)\cup\{q\}]}(q)-|E(T[Q])|\\
=&d(\P(T))-2|P|+|N_{T[\P(T)]}(P)|+2|Q|+\sum_{p\in P}\deg_{T[Q\cup\{p\}]}(p)\\
&-\sum_{q\in Q}\deg_{T[P\cup\{q\}]}(q)-\sum_{q\in Q}\deg_{T[(\P(T)\setminus P)\cup\{q\}]}(q)-|E(T[Q])|\\
=&d(\P(T))-2|P|+|N_{T[\P(T)]}(P)|+2|Q|\\
&-\sum_{q\in Q}\deg_{T[(\P(T)\setminus P)\cup\{q\}]}(q)-|E(T[Q])|\\
\leq &d(\P(T))-2|P|+|N_{T[\P(T)]}(P)|+2|Q|-\sum_{q\in Q}\deg_{T[(\P(T)\setminus P)\cup\{q\}]}(q).
\end{align*}
Let $P_1$ be the set of all vertices in $P$ that are not end-points of some paths in $\P^*(T)$, and $P_2:=P\setminus P_1$. One observes that,
\begin{align*}
-2|P|+|N_{T[\P(T)]}(P)|&\leq(-2|P_1|+|N_{T[\P(T)]}(P_1)|)+(-2|P_2|+|N_{T[\P(T)]}(P_2)|)\\
&\leq -2|P_2|+|N_{T[\P(T)]}(P_2)|\\
&\leq -|P_2|.
\end{align*}
Also, let $Q_1$ be the set of vertices in $Q$ adjacent to at least two vertices in $\P(T)\setminus P$, and put $Q_2:=Q\setminus Q_1$. Then
\begin{align*}
2|Q|-\sum_{q\in Q}\deg_{T[(\P(T)\setminus P)\cup\{q\}]}(q)=&(2|Q_1|-\sum_{q_1\in Q_1}\deg_{T[(\P(T)\setminus P)\cup\{q_1\}]}(q_1))\\
&+(2|Q_2|-\sum_{q_2\in Q_2}\deg_{T[(\P(T)\setminus P)\cup\{q_2\}]}(q_2))\\
\leq& 2|Q_2|-\sum_{q_2\in Q_2}\deg_{T[(\P(T)\setminus P)\cup\{q_2\}]}(q_2)\\
=& 2|Q_2|-\sum_{q_2\in Q_2}\deg_{T[(\P(T)\setminus P_2)\cup\{q_2\}]}(q_2).
\end{align*}
Thus
\[d((\P(T)\setminus P)\cup Q)\leq d(\P(T))-|P_2|+2|Q_2|-\sum_{q_2\in Q_2}\deg_{T[(\P(T)\setminus P_2)\cup\{q_2\}]}(q_2).\]
From the construction of $\P(T)$ and $\Q(T)$, we know that 
\begin{equation}\label{deg_P(T)(q)}
\deg_{T[\P_i(T)\cup\P_{i+1}(T)\cup\{q\}]}(q)\geq3
\end{equation}
for all $q\in\Q_i(T)$. Let $m$ be the smallest positive integer for which $\P_{m+1}(T)=\varnothing$. Also, let $F$ be the subgraph of $T$ induced by edges between $P_2$ and $Q_2$. Since $F'$ is a forest, we observes that
\begin{align*}
|P_2|+|Q_2|=|V(F')|&=|E(F')|+c(F')\\
&=\sum_{q_2\in Q_2}\deg_{F'}(q_2)+c(F')\\
&\geq\sum_{q_2\in Q_2}\deg_{T[P_2\cup\{q_2\}]}(q_2).
\end{align*}
Then
\begin{align*}
d((\P(T)\setminus P)\cup Q)\leq& d(\P(T))-|P_2|+2|Q_2|-\sum_{q_2\in Q_2}\deg_{T[(\P(T)\setminus P_2)\cup\{q_2\}]}(q_2)\\
\leq& d(\P(T))+3|Q_2|-\sum_{q_2\in Q_2}\deg_{T[P_2\cup\{q_2\}]}(q_2)\\
&-\sum_{q_2\in Q_2}\deg_{T[(\P(T)\setminus P_2)\cup\{q_2\}]}(q_2)\\
\leq& d(\P(T))+3|Q_2|-\sum_{q_2\in Q_2}\deg_{T[\P(T)\cup\{q_2\}]}(q_2)\\
\leq& d(\P(T))
\end{align*}
for $\deg_{T[\P(T)\cup\{q_2\}]}(q_2)\geq3$ for all $q_2\in Q_2$ (see \eqref{deg_P(T)(q)}). Therefore,
\[\dim(S/I)=\max\{d((\P(T)\setminus P)\cup Q)\colon\ P\subseteq\P(T),\ Q\subseteq\Q(T)\}=d(\P(T)),\]
as required.
\end{proof}
\begin{remark}
Let $T$ be a tree with an ascending labeling $\ell$. Let $A_i(T)$ be the set of all pendants of paths in $\P_i^*(T)$ with maximum labels, and $A'_i=\P_i(T)$, for all $i\geq1$. If $A=\cup A_i$ and $A'=\cup A'_i$, then $(A,A')\in\Delta(T,\ell)$ and $|A|+|A'|=|\P(T)|+|\P^*(T)|$. Thus
\[\dim(S/I)=|A|+|A'|.\]
\end{remark}
\begin{example}
\begin{itemize}
\item[(1)]If $I=L_{P_n}^{\KK}(2)$, then $\dim(S/I)=n+1$ for $\P(P_n)=V(P_n)$ and $\P^*(P_n)=\{P_n\}$.
\item[(2)]If $I=L_{S_n}^{\KK}(2)$, then $\dim(S/I)=2n-2$ for $\P(S_n)=[n-1]$ and $\P^*(S_n)$ is the set of all pendants of $S_n$ as induced subgraphs of $S_n$ if $n$ is the central vertex of $S_n$.
\item[(3)]If $I=L_{T}^{\KK}(2)$ with $T$ being the tree in Fig. \ref{P, Q} with $n$ vertices, then $\dim(S/I)=n+10$ for $|\P(T)|=n-10$ and $|\P^*(T)|=20$.
\end{itemize}
\end{example}

\end{document}